\newtcolorbox{AuthorsBox}[1][]{%
  title=Author's prompt,
  #1
}
\newtcolorbox{ChatGPTBox}[1][]{%
  colframe=red!20!white,
  title=Response of GPT-5-pro,
  #1
}
\newcommand{\KL}{{\mathsf KL}}
\newcommand{\bR}{{\mathbb R}}
\newcommand{\authors}[1]{\textcolor{red}{Author's comment: #1}}
\theoremstyle{definition}
\newtheorem{theorem}{Theorem}
\newtheorem{proposition}[theorem]{Proposition}
\newtheorem{remark}{Remark}
\newtheorem*{example*}{Example}
\title{Accelerating mathematical research with language models: \\ A case study of an interaction with GPT-5-Pro\\ on a convex analysis problem}
\author{Adil Salim}
\begin{document}
\maketitle

\maketitle

\begin{abstract} 
Recent progress in large language models has made them increasingly capable research assistants in mathematics. Yet, as their reasoning abilities improve, evaluating their mathematical competence becomes increasingly challenging. The problems used for assessment must be neither too easy nor too difficult, their performance can no longer be summarized by a single numerical score, and meaningful evaluation requires expert oversight.

In this work, we study an interaction between the author and a large language model in proving a lemma from convex optimization. Specifically, we establish a Taylor expansion for the gradient of the biconjugation operator—that is, the operator obtained by applying the Fenchel transform twice—around a strictly convex function, with assistance from GPT-5-pro, OpenAI’s latest model.

Beyond the mathematical result itself, whose novelty we do not claim with certainty, our main contribution lies in documenting the collaborative reasoning process. GPT-5-pro accelerated our progress by suggesting, relevant research directions and by proving some intermediate results. However, its reasoning still required careful supervision, particularly to correct subtle mistakes. While limited to a single mathematical problem and a single language model, this experiment illustrates both the promise and the current limitations of large language models as mathematical collaborators.
\end{abstract}

\section{Introduction}

\subsection{Mathematical context}
More than two centuries ago,~\citet{monge1781memoire} formulated a question that arises naturally across several areas of science: Given two sets \(U, V \subset \bR^d\) of equal volume, find the optimal volume-preserving map between them, where optimality is measured with respect to a cost function \(c(x,y) \ge 0\). This problem is now known in the mathematical literature as the \emph{optimal transport problem}.  

In a breakthrough result,~\citet{brenier1991polar} showed that when \(c(x,y) = \|x - y\|^2\), the optimal map exists, is unique, and is the gradient of a convex potential. Later,~\citet{gangbo1994elementary} simplified Brenier’s proof. A key ingredient of this simplification is the following simple convex analysis lemma (Lemma~2.4 therein):
\begin{equation}
\label{eq:wilfrid}
(\phi^{*} + t\,h)^{*}(x) = \phi(x) - t\,h(\nabla \phi(x)) + o(t),
\end{equation}
for almost every (a.e.) \(x \in \bR^d\), where \(\phi : \bR^d \to \bR\) is convex, \(h\) is a (possibly nonconvex) test function, and \({}^{*}\) denotes the Fenchel transform\footnote{The expansion is exact at $t=0$ because $(\phi^{*})^{*} = \phi$.}.  
This lemma was later extended to \(c\)-transforms by~\citet{gangbo1996geometry}, to solve the optimal transport problem for general costs of the form \(c(x,y) = h(\|x - y\|)\), with \(h\) strictly convex.

As part of a research project on optimal transport, we aimed to prove the following variant of~\ref{eq:wilfrid}, assuming $\phi$ is strictly convex:
\begin{equation}
\label{eq:us}
\nabla (\phi + t h)^{**}(x) = \nabla \phi(x) + t \nabla h(x) + o(t).
\end{equation}
Without additional assumptions, we could not rigorously derive~\eqref{eq:us} by applying~\eqref{eq:wilfrid} twice in a straightforward way, nor did we find it stated in the convex analysis literature. However, given its apparent simplicity, we do not claim the novelty of~\eqref{eq:us} with certainty.

In this paper, we present a proof of Equation~\eqref{eq:us}. In particular, we show that the remainder term $o(t)$ is equal to zero if $|t|$ is small enough (depending on $x$). But, beyond the simple mathematical result, our main contribution is to document the interaction with GPT-5-pro that led to the proof of~\eqref{eq:us}. 

\subsection{Social context}

Language models such as Claude, Gemini and GPT are machine learning systems trained on vast amounts of text data to answer prompts without human supervision. Their performance is typically measured by how much they can help---or potentially harm---the human user. 

Although they can be tested on nearly any domain of human knowledge, they are often evaluated on mathematics, as it is viewed as a direct measure of reasoning or "intelligence".  

With the recent increase in the reasoning abilities of language models, evaluating them in mathematics has become increasingly challenging. Many benchmark problems are now too easy, while most research questions remain too difficult. An alternative to automated benchmarks is to evaluate language models as research assistants. That requires setting up realistic research collaborations---a process that is inherently qualitative and non-scalable, since it depends on human experts, extended interactions, and subjective judgment.

Since the release of GPT-5-pro by OpenAI in August 2025, several mathematicians have documented their use of GPT-5-pro to solve math problems. In a post on X, S\'ebastien Bubeck showed how GPT-5-pro could improve a bound on the step size of an optimization algorithm. Then,~\citet{diez2025mathematical} used GPT-5-pro to compute the convergence speed of a Central Limit Theorem. Later on,~\cite{feldman2025g} submitted five combinatorial optimization open questions to GPT-5-pro, and the model could solve three of them. In a post on X, Paata Ivanisvili showed that GPT-5-pro could provide a counter example to a conjecture made in Fall 2013, in the context of a Simons Institute program on real analysis. Finally, Ernest Ryu recently announced on X that he could prove the pointwise convergence of the Nesterov ODE with help of GPT-5-pro.

\subsection{Contribution}
Beyond~\citet{diez2025mathematical}, we have limited available evaluation data of frontier language models as research collaborators in mathematics. The goal of this study is to add a data point to this evaluation. 

To this end, we focus on proving Equation~\eqref{eq:us} with help of GPT-5-pro, and we document our interaction with the model. We believe that proving Equation~\eqref{eq:us} is an interesting math problem that is not trivial, but simple enough to be discussed with a language model and to be understood by a large number of mathematicians.

Beyond the mathematical result~\eqref{eq:us}, this study gives an example of a productive use of a language model for research. GPT-5-pro accelerated our progress by suggesting relevant research directions such as the key conjecture, and by proving some lemmas. But its reasoning still required careful supervision, particularly to correct mistakes.

\subsection{Outline}
This paper is organized as follows. In Section~\ref{sec:math}, we state and prove the main mathematical result~\eqref{eq:us}. The text {\color{blue} in blue} indicates the ideas that mostly came from GPT-5-pro and the text in black indicates the ideas that mostly came from us. In Section~\ref{sec:summary}, we summarize the interaction we had with the language model. We conclude in Section~\ref{sec:ccl}. The full chat can be found in Appendix, with our comments on some of the model's mistakes.  

\section{Main mathematical result}
\label{sec:math}
Before presenting our main result, we specify some notations. For every function $f: \bR^d \to (-\infty,+\infty]$, we denote by $f^{*}$ its Fenchel transform
\begin{equation}
f^{*}(y) := \sup_{x \in \bR^d} \langle x,y \rangle - f(x).
\end{equation}
The biconjugate $(f^{*})^{*}$ of $f$ is denoted $f^{**}$. For every $x \in \bR^d$, the subdifferential of $f$ at $x$, denoted $\partial f(x)$, is the subset of $\bR^d$ defined by
\begin{equation}
p \in \partial f(x) \Longleftrightarrow \forall y \in \bR^d, \quad  f(y) \geq f(x) + \langle p , y - x \rangle.
\end{equation}
We now state our main result. 
\begin{proposition}
\label{prop:biconj}
{\color{blue} Let $x \in \bR^d$ and $\phi:\bR^d\to\bR$ be convex and twice differentiable at $x$. Assume
$\nabla^2\phi(x)$ symmetric positive definite, and set
\[
\lambda := \lambda_{\min}\big(\nabla^2\phi(x)\big) \;>\; 0.
\]
Let $h \in C_c^2(\bR^d)$ with compact support, and let $L := \|\nabla^2 h\|_{\infty}$ and $M := \| h\|_{\infty}$.}
Define $f:=\phi + t h$ and set
\[
t_x := \min \left(\frac{\lambda}{4L}, \frac{\lambda \delta^2}{64 M}\right) > 0,\]
where $\delta > 0$ is a constant depending on $x$ defined in the proof.
{\color{blue} Then, for every $t \in (-t_x,t_x)$,
\begin{equation}
\label{eq:fct}
(\phi+th)^{**}(x) \;=\; (\phi+th)(x),
\end{equation}
and 
\begin{equation}
\label{eq:grad}
\nabla(\phi+th)^{**}(x) = \nabla \phi(x)+t \nabla h(x).
\end{equation}}
\end{proposition}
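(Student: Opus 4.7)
The plan is to reduce the proposition to producing a single global affine minorant of $f := \phi + t h$ that agrees with $f$ at $x$. I will set $p := \nabla\phi(x) + t\nabla h(x)$ and $\ell(y) := f(x) + \langle p, y-x\rangle$, and then prove that
\[
g(y) \;:=\; f(y) - \ell(y) \;=\; B_1(y) + t\,B_2(y) \;\geq\; 0 \qquad \text{for every } y \in \bR^d,
\]
where $B_1(y) := \phi(y) - \phi(x) - \langle \nabla\phi(x), y-x\rangle$ is the Bregman divergence of $\phi$ (nonnegative since $\phi$ is convex) and $B_2(y) := h(y) - h(x) - \langle \nabla h(x), y-x\rangle$. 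Once this is established, $f^{**}(x) \geq \ell(x) = f(x)$ combined with $f^{**} \leq f$ yields \eqref{eq:fct}, and moreover $p \in \partial f^{**}(x)$. For \eqref{eq:grad}, I will observe that any $q \in \partial f^{**}(x)$ is a global support vector of the (non-convex) function $f$ at $x$: since $f$ is twice differentiable at $x$, the first-order Taylor expansion $f(x + \varepsilon u) = f(x) + \varepsilon \langle \nabla f(x), u \rangle + O(\varepsilon^2)$ combined with $f(x+\varepsilon u) \geq f(x) + \varepsilon \langle q, u\rangle$ forces $q = \nabla f(x) = p$. Hence $\partial f^{**}(x) = \{p\}$, and since $f^{**}$ is convex and finite on $\bR^d$ with singleton subdifferential at $x$, it is differentiable there with $\nabla f^{**}(x) = p$.

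I prove $g \geq 0$ on two regimes. \emph{Local regime.} The Taylor expansion of $\phi$ at $x$ gives $B_1(y) = \tfrac{1}{2}(y-x)^\top \nabla^2\phi(x)(y-x) + o(\|y-x\|^2)$, so I choose $\delta > 0$ (depending on $x$ via this remainder) small enough that $B_1(y) \geq \tfrac{\lambda}{4}\|y-x\|^2$ for $\|y-x\| \leq \delta$. Taylor's theorem with integral remainder applied to $h \in C_c^2$ gives $|B_2(y)| \leq \tfrac{L}{2}\|y-x\|^2$ globally. On the ball $\|y-x\| \leq \delta$ this yields $g(y) \geq \bigl(\tfrac{\lambda}{4} - \tfrac{|t|L}{2}\bigr)\|y-x\|^2 \geq 0$ whenever $|t| \leq \tfrac{\lambda}{4L}$, the first ingredient in $t_x$. \emph{Global regime.} For $\|y-x\| > \delta$, I promote the local quadratic lower bound to a linear one: since $B_1$ is convex with $B_1(x) = 0$, its restriction to the ray from $x$ through $y$ is convex and vanishes at the origin, so slopes from the origin are nondecreasing and, evaluating at the boundary point $z := x + \delta (y-x)/\|y-x\|$, one obtains $B_1(y) \geq \tfrac{\|y-x\|}{\delta} B_1(z) \geq \tfrac{\lambda\delta}{4}\|y-x\|$. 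The quadratic Taylor bound on $B_2$ is useless at infinity, so I exploit the compact support (hence boundedness) of $h$: $|h(y)-h(x)|\leq 2M$ is immediate, and the Landau--Kolmogorov inequality applied to the one-variable function $s \mapsto h(x+s u)$ yields $\|\nabla h(x)\| \leq \sqrt{2ML}$, hence $|B_2(y)| \leq 2M + \sqrt{2ML}\,\|y-x\|$. The requirement $B_1(y) \geq |t|\,|B_2(y)|$ for $\|y-x\| \geq \delta$ then splits into a constant-term condition $|t| \leq \lambda\delta^2/(16M)$ and a linear-term condition $|t| \leq \lambda\delta/(8\sqrt{2ML})$; both are implied jointly by $|t| \leq \lambda/(4L)$ and $|t| \leq \lambda\delta^2/(64M)$, the second by inspection and the first via the geometric mean $|t| \leq \lambda\delta/(16\sqrt{LM})$.

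I expect the main obstacle to be the far-field bookkeeping in the global regime. The local Taylor comparison and the convex extrapolation of $B_1$ are both routine, but the quadratic Taylor remainder on $B_2$ grows faster than the linear lower bound on $B_1$ at infinity, so the argument must genuinely use the boundedness of $h$ together with a Landau-type control on $\nabla h(x)$. Everything else---including the passage from the supporting hyperplane at $x$ to the differentiability of $f^{**}$ at $x$---is standard.
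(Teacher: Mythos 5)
Your proof is correct, and its skeleton is the same as the paper's: you exhibit the single affine minorant $\ell(y)=f(x)+\langle \nabla f(x),y-x\rangle$ touching $f=\phi+th$ at $x$, verify $f\ge\ell$ separately on the ball $\|y-x\|\le\delta$ (quadratic lower bound on $\phi$ from the pointwise Taylor expansion plus the $L$-smoothness of $h$) and outside it, conclude \eqref{eq:fct} from the affine-envelope characterization of $f^{**}$, and obtain \eqref{eq:grad} by showing every $q\in\partial f^{**}(x)$ is a global supporting slope of $f$ at $x$, hence equals $\nabla f(x)$ by differentiability of $f$ at $x$, and then invoking the singleton-subdifferential criterion for differentiability of the convex function $f^{**}$. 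Where you genuinely diverge is the far-field step. The paper keeps the full slope $\nabla\phi(x)+t\nabla h(x)$ inside the comparison affine function: it first establishes a constant margin $\lambda\delta^2/16$ for $\phi$ over that affine function on the sphere $\|u\|=\delta$ (absorbing $t(h(x)-h(x+u))$ via $\|h\|_\infty$), propagates this constant margin to all $\|u\|\ge\delta$ by a convexity/contradiction argument applied to $\phi(x+\cdot)$ minus the affine map, and only then re-absorbs $t(h(x+u)-h(x))\le 2|t|M$ --- so it never needs to control $\nabla h(x)$. You instead split off the Bregman term $B_1$ of $\phi$ with slope $\nabla\phi(x)$ only, extrapolate it linearly along rays to get the growing bound $B_1(y)\ge\tfrac{\lambda\delta}{4}\|y-x\|$, and must then dominate the linearly growing piece $|t|\,|\langle\nabla h(x),y-x\rangle|$, which forces the extra Landau--Kolmogorov ingredient $\|\nabla h(x)\|\le\sqrt{2ML}$; your bookkeeping (splitting the margin, worst case at $\|y-x\|=\delta$ for the constant term, geometric mean of the two constraints for the linear term) does check out and reproduces the stated $t_x=\min(\lambda/4L,\lambda\delta^2/64M)$. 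In short, your route buys a margin that grows at infinity and avoids the contradiction argument, at the cost of an additional (standard) inequality relating $\|\nabla h(x)\|$ to $M$ and $L$; the paper's route is slightly more economical in hypotheses used per step but needs the ray-contradiction lemma to push the constant sphere margin outward.
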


\begin{remark}
The subtlety of Proposition~\ref{prop:biconj} lies in its locality. We do not know \textit{a priori} if $(\phi + t h)$ is convex for $|t|$ small enough, nor we know if $\nabla^2 \phi$ (well defined a.e. because $\phi$ is convex) is positive definite in a neighborhood of $x$\footnote{Even if we were assuming $\phi$ strictly convex, we would not know \textit{a priori} that the eigenvalues of $\nabla^2 \phi$ are uniformly bounded below by a positive constant in a neighborhood of $x$.}. Hence, Equation~\eqref{eq:fct}, which is valid at $x$, might fail to hold around $x$, and Equation~\eqref{eq:grad} cannot be obtained directly by differentiating Equation~\ref{eq:fct}.
\end{remark}    

\begin{proof}

{\color{blue}
By Taylor at $x$, there exists a function
$r : \bR^d \to \bR$ with $r(u)\to 0$ as $u\to 0$ such that,
\begin{equation}\label{eq:psi-taylor-1}
\phi(x+ u)
=
\phi(x) + \langle \nabla\phi(x),u\rangle
+ \frac12\langle u, \nabla^2\phi(x) u\rangle
+ r(u)\,\|u\|^2 .
\end{equation}
Using $\langle u, \nabla^2\phi(x) u\rangle \geq \lambda \|u\|^2$, 
\begin{equation}\label{eq:psi-taylor}
\phi(x+ u)
\;\ge\;
\phi(x) + \langle \nabla\phi(x),u\rangle
+ \left(\frac{\lambda}{2} + r(u)\right)\,\|u\|^2.
\end{equation}
There exists $\delta > 0$ such that $|r(u)| < \lambda / 4$ if $\|u\| < \delta$. Therefore, for every $u$ such that $\|u\| < \delta$,
\begin{equation}\label{eq:psi-quad}
\phi(x+ u)
\;\ge\;
\phi(x) + \langle \nabla\phi(x),u\rangle
+ \frac{\lambda}{4}\,\|u\|^2.
\end{equation}
Besides, for every $t \in \bR$, $\nabla (th)$ is $|t|L$–Lipschitz. Therefore,
\begin{equation}\label{eq:h-smooth}
(th)(x+ u)
\;\ge\;
(th)(x) + \langle \nabla (th)(x),u\rangle
- \frac{|t|L}{2}\,\|u\|^2 .
\end{equation}

\paragraph{1) On the ball of radius $\delta$ centered at $x$.}
Adding \eqref{eq:psi-quad} and \eqref{eq:h-smooth} gives, for
$\|u\|\le \delta$,
\begin{equation}
\label{eq:quad}
f(x+ u)
\;\ge\;
f(x) + \langle \nabla f(x),u\rangle
+ \Big(\tfrac{\lambda}{4}-\tfrac{|t|L}{2}\Big)\|u\|^2 .
\end{equation}
Taking $|t|<\lambda/4L$, the bracket is at least $\lambda/8$,
\begin{equation}\label{eq:on-ball}
f(x+u) \;\ge\; f(x)+\langle \nabla f(x),u\rangle + \frac{\lambda}{8} \|u\|^2
\qquad \text{for all}\ \|u\|\le \delta .
\end{equation}
}
{\color{blue}\paragraph{2) On the sphere of radius $\delta$ centered at $x$.}

If $\|u\|=\delta$, the bound~\eqref{eq:quad} yields a positive margin
\begin{equation}
f(x+ u)
\;\ge\;
f(x) + \langle \nabla f(x),u\rangle
+ \frac{\lambda}{8}\delta^2 .
\end{equation}}
We write it in terms of $\phi$,
\begin{equation}
\phi(x+ u)
\;\ge\;
\phi(x) + \langle \nabla f(x),u\rangle
+ \frac{\lambda}{8}\delta^2 + t(h(x) - h(x+u)).
\end{equation}
Since $|t(h(x) - h(x+u))| \leq 2 |t| M$, if $|t|<\lambda \delta^2/(32M)$ we have $t(h(x) - h(x+u)) > -\lambda \delta^2 / 16$. Therefore, for all $u \in \bR^d$ such that $\|u\|=\delta$,
\begin{equation}\label{eq:sphere-margin-f}
\phi(x+u) \;\ge\; \phi(x) + \langle \nabla f(x),u\rangle + \frac{\lambda}{16}\delta^2 .
\end{equation}
The r.h.s. defines an affine map that we denote $L(u) := \phi(x) + \langle \nabla f(x),u\rangle + \frac{\lambda}{16}\delta^2$.
\paragraph{4) Outside the ball of radius $\delta$ centered at $x$.}
We have shown that $\phi(x+u) \geq L(u)$ if $\|u\| = \delta$. {\color{blue} We first show that this is still true if $\|u\| > \delta$.} Assume by contradiction that there exists $\bar{u}$ such that $\|\bar{u}\| > \delta$ and $\phi(x+\bar{u}) < L(\bar{u})$. Note that we obviously have $\phi(x) = \phi(x+0) < L(0)$. Let $g(u) = \phi(x+u) - L(u)$. Then $g$ is convex, $g(0) < 0$ and $g(\bar{u}) < 0$. By convexity, $g(\alpha \bar{u}) < 0$ for every $\alpha \in [0,1]$. Noting that $\alpha = \delta / \|\bar{u}\| \in [0,1]$, we have $\phi(x+\alpha\bar{u}) < L(\alpha\bar{u})$. However, $\|\alpha\bar{u}\| = \delta$ which contradicts~\eqref{eq:sphere-margin-f}. {\color{blue}Therefore, for all $u$ s.t. $\|u\| \geq \delta$, $\phi(x + u) \geq L(u)$, \textit{i.e.},
\begin{equation}
\phi(x+u) \;\ge\; \phi(x) + \langle \nabla f(x),u\rangle + \frac{\lambda}{16}\delta^2.
\end{equation}}
Since $|t(h(x) - h(x+u))| \leq 2 |t| M$, if $|t|<\lambda \delta^2/(64M)$ we have $$t(h(x+u) - h(x)) > -\frac{\lambda \delta^2}{32}.$$ Adding the last two inequalities, we obtain
\begin{equation}
\label{eq:outside-ball}
f(x+u) \;\ge\; f(x) + \langle \nabla f(x),u\rangle + \frac{\lambda}{32}\delta^2, \qquad \text{for all}\ \|u\|\ge \delta 
\end{equation}
\smallskip
\paragraph{Conclusion.}
Considering both cases $\|u\| \leq \delta$ or $\|u\| \geq \delta$, we have using~\eqref{eq:on-ball} and~\eqref{eq:outside-ball}
\begin{equation}
\label{eq:last1}
f(x + u) \;\ge\; f(x) + \langle \nabla f(x),u\rangle \qquad \text{for all}\ u \in \bR^d, 
\end{equation}
and the inequality is strict if $u \neq 0$. Equivalently, 
\begin{equation}
\label{eq:last}
f(y) \;\ge\; f(x) + \langle \nabla f(x),y - x\rangle \qquad \text{for all}\ y \in \bR^d. 
\end{equation}
Consider the subset of $\bR$ defined by 
$$
E := \{a(x), \quad a: \bR^d \to \bR \text{ affine function s.t. } \forall y \in \bR^d, a(y) \leq f(y) \}. $$
Taking $a(y) = f(x) + \langle \nabla f(x),y - x\rangle$, we see using~\eqref{eq:last} that $f(x) \in E$. Besides, for all $\alpha \in E$, $\alpha \leq f(x)$. Therefore $f(x) = \sup E$. Using the characterization of $f^{**}$ as the affine envelope of $f$ (see~\cite[Section 12]{Roc70}), $f^{**}(x) = \sup E$. We conclude that $f^{**}(x) = f(x)$, \textit{i.e.}, that Equation~\eqref{eq:fct} holds. 

\paragraph{Gradient.}
Since $f^{**}$ is convex and $f^{**}(y) \leq f(y) < \infty$ for all $y \in \bR^d$, $\partial f^{**}(y) \neq \emptyset$ using~\cite[Theorem 23.4]{Roc70}. 
{\color{blue} Let $z \in \partial f^{**}(x)$, \textit{i.e.},
\begin{equation}
f^{**}(x + u) \;\ge\; f^{**}(x) + \langle z,u\rangle \qquad \text{for all}\ u \in \bR^d. 
\end{equation}
Since $f^{**}(x + u) \leq f(x + u)$ and $f^{**}(x) = f(x)$,
\begin{equation}
f(x + u) \;\ge\; f(x) + \langle z,u\rangle \qquad \text{for all}\ u \in \bR^d. 
\end{equation}
Replacing $u$ by $\varepsilon u$, $\varepsilon > 0, \|u\| = 1$, we rewrite the last inequality as
\begin{equation}
\frac{f(x + \varepsilon u) - f(x)}{\varepsilon} \;\ge\; \langle z,u\rangle \qquad \text{for all}\ \|u\| = 1. 
\end{equation}
Letting $\varepsilon \to 0$ and using that $f$ is differentiable at $x$,
\begin{equation}
\langle \nabla f(x),u\rangle \;\ge\; \langle z,u\rangle \qquad \text{for all}\ \|u\| = 1, 
\end{equation}
which classically implies $z = \nabla f(x)$}. In other words, $\partial f^{**}(x) = \{\nabla f(x)\}$. Using~\cite[Theorem 25.1]{Roc70}, $f^{**}$ is differentiable at $x$ and $\nabla f^{**}(x) = \nabla f(x)$. 
\end{proof}

{\color{blue}
\begin{proposition}
Let $\phi:\bR^d\to\bR$ be strictly convex and $h \in C_c^2(\bR^d)$. Let $F(x,t) := \nabla(\phi+th)^{**}(x)$. Then, for a.e. $x \in \bR^d$, $F$ is derivable at $t = 0$ and 
\[
\partial_2 F(x,0) = \nabla h(x).
\]
\end{proposition}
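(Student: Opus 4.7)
The plan is to upgrade the pointwise Proposition~\ref{prop:biconj} to an almost-everywhere statement by identifying a set $\Omega \subset \bR^d$ of full Lebesgue measure on which its hypotheses hold, and then applying it at each $x \in \Omega$ to read off the $t$-derivative of $F(x,\cdot)$ in closed form.

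First, I would invoke Alexandrov's theorem to deduce that $\phi$ is twice differentiable in the Alexandrov sense at almost every $x$, with a symmetric positive semidefinite Hessian $\nabla^2\phi(x)$ and a well-defined gradient $\nabla\phi(x)$. Second, and crucially, I would try to promote positive semidefiniteness to strict positive definiteness, i.e.\ show that $\{x : \det\nabla^2\phi(x) = 0\}$ is Lebesgue negligible. The natural route exploits duality: strict convexity of $\phi$ makes $\phi^{*}$ essentially smooth on the interior of its domain, so that $\nabla\phi$ and $\nabla\phi^{*}$ are reciprocal bijections on corresponding full-measure sets. Applying Alexandrov to $\phi^{*}$ and using the identity $\nabla^2\phi^{*}(\nabla\phi(x)) = (\nabla^2\phi(x))^{-1}$ (valid at points where both Alexandrov Hessians exist and $\nabla^2\phi(x)$ is invertible), together with a change-of-variables argument along the monotone map $\nabla\phi$, should force the degenerate set to be a null set. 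Third, for every $x$ in the resulting full-measure set $\Omega$, Proposition~\ref{prop:biconj} provides a threshold $t_x > 0$ such that, for all $t \in (-t_x, t_x)$,
\[
F(x,t) \;=\; \nabla(\phi+th)^{**}(x) \;=\; \nabla\phi(x) + t\,\nabla h(x).
\]
Since $F(x,\cdot)$ agrees with an affine function of $t$ in a neighborhood of $0$, it is derivable at $t=0$ with $\partial_2 F(x,0) = \nabla h(x)$, which is the desired conclusion.

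The main obstacle is the second step: promoting the Alexandrov Hessian from positive semidefinite to positive definite almost everywhere. Strict convexity of $\phi$ is a pointwise condition and does not, by itself, rule out a zero eigenvalue of $\nabla^2\phi(x)$ on a set of positive measure (one can cook up strictly convex $\phi$ whose Alexandrov Hessian degenerates on a fat Cantor-type set along a coordinate direction). The argument must therefore use strict convexity \emph{globally}, via the essential smoothness of $\phi^{*}$, rather than locally at $x$. Once the null-set conclusion is secured, the remaining steps reduce to a direct application of Proposition~\ref{prop:biconj}.
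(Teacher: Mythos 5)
Your steps 1 and 3 coincide with the paper's proof: restrict to a full-measure set of Alexandrov points where $\nabla^2\phi(x)$ is symmetric positive definite, apply Proposition~\ref{prop:biconj} there to get $F(x,t)=\nabla\phi(x)+t\,\nabla h(x)$ for all $|t|<t_x$, and conclude since $F(x,\cdot)$ is affine near $t=0$. The whole difference lies in your step 2. The paper does not prove the a.e.\ positive definiteness at all; it imports it as a black box by citing \cite[Theorem 5.5.4]{ambrosio2008gradient}. You attempt to derive it from strict convexity by duality, and you rightly identify this as the main obstacle --- but the route you sketch does not close.

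The identity $\nabla^2\phi^{*}(\nabla\phi(x))=(\nabla^2\phi(x))^{-1}$ is only available at points where $\nabla^2\phi(x)$ is already known to be invertible, so it cannot be the engine that establishes invertibility. Running the duality the other way --- start from a.e.\ $y$ where $\phi^{*}$ has an Alexandrov expansion and set $x=\nabla\phi^{*}(y)$ --- you would need the resulting set of $x$'s to have full Lebesgue measure, and this is precisely where the ``change-of-variables along $\nabla\phi$'' breaks: a monotone gradient map can send a set of full measure into a Lebesgue-null set, about which Alexandrov's theorem for $\phi^{*}$ says nothing. Concretely, in $d=1$ take $\phi'=G$ with $G$ a strictly increasing, continuous, \emph{singular} function (the distribution function of a singular measure with full support). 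Then $\phi$ is $C^1$ and strictly convex, yet its Alexandrov second derivative equals $G'=0$ at a.e.\ point, and $G$ maps a.e.\ point into a null set. So strict convexity alone does not force positive definiteness of the Alexandrov Hessian a.e., and your step 2 is a genuine gap rather than a technical loose end; the nondegeneracy must be either assumed, obtained from extra structure, or taken from a cited theorem whose hypotheses are verified --- which is what the paper does (and the reader may wish to check that the cited statement really applies to a general strictly convex $\phi$).
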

\begin{proof}
The Alexandrov Hessian $\nabla^2 \phi(x)$ exists and is symmetric positive definite for a.e. $x \in \bR^d$ (see~\cite[Theorem 5.5.4]{ambrosio2008gradient}). Using Proposition~\ref{prop:biconj},
\[
\frac{\nabla(\phi+th)^{**}(x) - \nabla \phi(x)}{t} = \nabla h(x),
\]
if $|t| < t_x$. Recalling that $\phi = \phi^{**}$ and letting $t \to 0$ we get the conclusion.
\end{proof}}
The last proposition is equivalent to Equation~\eqref{eq:us}.
\section{Summary of the discussion}
\label{sec:summary}
We now summarize the chat with GPT-5-pro presented in the Appendix.

The broad context of the discussion was an optimal transport problem: compute a velocity field at $t = 0$ for the curve $t \mapsto \nu_t := \nabla(\phi + t h)^{**} \# \mu$, where $\mu$ is a given probability measure and $\#$ the pushforward operation. To this end, we wanted to establish~\eqref{eq:us} for a.e. $x \in \bR^d$.

When we asked GPT-5-pro to prove Proposition~\ref{prop:biconj} directly, the model mostly provided incorrect proofs. It correctly established a quadratic lower bound on $\phi$ around $x$ first, but then wrote that this lower bound implies strong convexity of $\phi$ around $x$. However, we made no inference about the continuity of the Alexandrov Hessian $\nabla^2 \phi$, and so we do not know that around $x$, the eigenvalues of $\nabla^2 \phi$ are uniformly bounded below by a positive constant.

After pointing out this issue, GPT-5-pro proposed a slightly different approach relying on showing that (i) the Taylor expansion of $(\phi + th)^{**}(x)$ is valid at $x$ only, and not necessarily on a neighborhood of $x$ (ii) the $o(t)$ remainder term in the Taylor expansion is actually zero for $|t|$ small enough. Even if GPT-5-pro did not manage to prove these two items immediately, (i) and (ii) constitute the key conjecture. Next, we suggested the language model a technique to prove (i) and (ii). More precisely, we asked GPT-5-pro to show that $(\phi + th)$ is above its tangent at $x$ (see the "Conclusion" paragraph in the proof of Proposition~\ref{prop:biconj}). 

GPT-5-pro correctly explained why $(\phi + th)$ is above its tangent at $x$ on a ball of radius $r>0$ centered at $x$. It correctly suggested that this result can be extended outside the ball (see in the Appendix references to the "boundary gap"), but it made mistakes when trying to prove that result, repeatedly using that $-\|y-x\| > -r$ for $y$ outside the ball. We completed the proof of Equation~\eqref{eq:fct} ourselves.

Because Equation~\eqref{eq:fct} is, in general, not valid in a neighborhood of $x$, one cannot differentiate it to directly obtain~\eqref{eq:grad}. Instead, GPT-5-pro wrote in one shot a proof of~\eqref{eq:grad} relying on a convex analysis argument. 

In summary, GPT-5-pro proposed several interesting directions. Although it did not produce a fully correct proof, it identified the key conjecture, established several intermediate results, and provided enough insight for us to complete the proof efficiently.

\section{Conclusion}
\label{sec:ccl}

We investigated the collaboration between the author and GPT-5-pro in establishing Equation~\eqref{eq:us}. The language model significantly accelerated the research process, although human supervision remained necessary.

This study presents a qualitative case limited to a single task and a single language model. While it highlights the promise of language models as research assistants, developing systematic and scalable frameworks to assess such collaborations remains an important direction for future work.

\section{Acknowledgment}

We thank Wilfrid Gangbo and Dimitri Shlyakhtenko for insightful discussions. Wilfrid Gangbo reviewed the proof of Proposition~\ref{prop:biconj} and made several comments on this study, some of which were included. Part of this work was done while the author was visiting the Institute for Pure and Applied Mathematics (IPAM) at UCLA.

 \newcommand{\noop}[1]{} \def\cprime{$'$} \def\cdprime{$''$} \def\cprime{$'$}

\newpage

\appendix

\section{Interaction with ChatGPT-5-pro}

The following chat has been minimally edited for clarity. We comment in red on some of the mistakes of the language model. The broad topic of the chat was an optimal transport problem. While we extracted the part of the chat that is relevant to our Equation~\eqref{eq:us}, the reader might see references to a broader math problem. We indicate the text that was erased using [...].

\begin{AuthorsBox}

OK. but I am not sure of the moreau envelope smoothing showing that the $g_\epsilon$ converges to zero. 

Nevermind. I think we could greatly simplify the proof if we use $(\phi + t h )^{**}$ instead. Because formally the Taylor expansion should give $\phi + t h + o(t)$, and so the $A$ matrix will not appear after taking the gradient.  

\begin{enumerate}
    \item Find a reference for that. If you dont find, try to prove it
    \item Make a list of all elements of the proof that we need to adapt. I think the first item is to take the gradient in this taylor expansion
    \item Prove all items in the list
\end{enumerate}

Let's go!

\end{AuthorsBox}

\medskip

\begin{ChatGPTBox}
Love the idea—switching to the path
\[
\phi_t := (\phi+t\,h)^{**}
\qquad (t\ \text{small})
\]
so that we always stay inside the class “gradient of a convex potential” and, morally, the first–order perturbation in $t$ is just $h$. Below I (i) point to the right references, (ii) list precisely what must be adapted, and (iii) prove each item at the level we need. I’ll be very explicit about where we rely on standard theorems (with citations) and where we give short self-contained arguments.

\medskip
\noindent\rule{\textwidth}{0.4pt}

\subsection*{1) References for the first-order expansion of $(\phi+t h)^{**}$}

The right language is \textbf{epi-differentiability} and \textbf{tilt-stability} in variational analysis.

\begin{itemize}
    \item \textbf{Rockafellar–Wets, Variational Analysis (1998)}:
    \begin{itemize}
        \item Epi-convergence and epi-derivatives (Ch. 7, 13).  
        \item Stability at points with \textbf{single subgradient} (i.e., where the convex $\phi$ is differentiable, equivalently $\partial\phi(x)=\{\nabla\phi(x)\}$).  
        \item \textbf{Key fact:} biconjugation $f\mapsto f^{**}$ is epi-continuous and preserves first-order epi-derivatives; at points where the active subgradient is unique, the first epi-derivative of $t\mapsto (f+t h)^{**}$ in direction $h$ is exactly $h$ (no convexification needed).
        \item See especially the “second-order convex analysis” notes by Rockafellar for quick access: first-order expansions via epi-derivatives and the proto-derivative of the subdifferential mapping (OU lecture notes “Second-Order Convex Analysis”).
    \end{itemize}

    \item \textbf{Poliquin–Rockafellar, “A calculus of epi-derivatives applicable to nonconvex functions” (1996)}: chain rules for epi-derivatives of composites and for perturbations $f+t h$ with smooth $h$. In particular, when the outer operation is biconjugation (closed convex hull), the first epi-derivative passes through the operation, and if $\partial\phi(x)$ is a singleton, the epi-derivative at $\phi$ in direction $h$ is just $h$ (pointwise at $x$).
\end{itemize}

\authors{Here, the model seems to be confused between Taylor expansion of $(\phi + th)^{**}(x)$, where $h$ is a function, and Taylor expansion of $\phi^{**}(x + th)$, where $h$ is a vector. The "takeaway" below do not seem to follow from the references cited above.}

\textbf{Takeaway:} At $\mu_0$-a.e.\ $x$ where $\phi$ is twice differentiable with $\nabla^2\phi(x)\succ0$ (true by Alexandrov, and the positive definiteness is the transport one-to-one a.e.\ fact you already used), the map
\[
t\longmapsto (\phi+t h)^{**}(x)
\]
admits the first–order expansion
\[
(\phi+t h)^{**}(x)= \phi(x) + t\,h(x) + o(t)
\quad\text{and}\quad
\nabla (\phi+t h)^{**}(x)= \nabla\phi(x) + t\,\nabla h(x) + o(t),
\]
for every $h\in C^2_c(\mathbb R^d)$.

If you prefer an “elementary” proof in our setting (no heavy epi-derivative jargon), see Item (A) below: we give a local argument that near any point where $\nabla^2\phi(x)\succ0$, the convexification has \textbf{no effect to first order}—indeed, for small $|t|$ the biconjugate coincides with $\phi+t h$ on a small ball around $x$, so the gradient expansion becomes $\nabla\phi(x)+t\nabla h(x)+o(t)$.

\authors{The statement "the biconjugate coincides with $\phi+t h$ on a small ball around $x$" is incorrect, but we will see that it can be corrected. [...]}

\subsection*{3) Proofs of the items}

I’ll first give an \textbf{elementary local argument} for the first-order expansion (Item 1), then indicate how the epi-derivative reference encapsulates it. The remaining items then follow quickly.

\subsubsection*{(A) Local first-order expansion of $(\phi+t h)^{**}$ near points with $\nabla^2\phi(x)\succ0$}

Fix $x$ where $\phi$ is twice differentiable with $\nabla^2\phi(x)\succ0$. Let $\lambda_{\min}>0$ be the smallest eigenvalue of $\nabla^2\phi(x)$. By Alexandrov’s second-order expansion, for small $r>0$,
\[
\phi(x+u)= \phi(x)+\langle \nabla\phi(x),u\rangle + \tfrac12\langle \nabla^2\phi(x)u,u\rangle + o(\|u\|^2)
\quad\text{for }\|u\|\le r.
\]
Hence, shrinking $r$ if needed, there is $m\in(0,\lambda_{\min})$ such that
\[
\phi(x+u)\ \ge\ \phi(x)+\langle \nabla\phi(x),u\rangle + \tfrac{m}{2}\|u\|^2
\quad\text{for }\|u\|\le r.
\tag{1}
\]
Let $M:=\sup_{B(x,r)}\|\nabla^2 h\|$ (finite since $h\in C^2_c$). Choose $|t|\le t_0:=m/(2M)$. Then on $B(x,r)$,
\[
(\phi+t h)(x+u)\ \ge\ \phi(x)+\langle \nabla\phi(x),u\rangle + \tfrac{m}{2}\|u\|^2
\;+\; t\big(h(x)+\langle \nabla h(x),u\rangle\big) - |t|\,\tfrac{M}{2}\|u\|^2,
\]
which gives the \textbf{local strong convexity} estimate
\[
(\phi+t h)(x+u)\ \ge\ (\phi+t h)(x)+\langle \nabla\phi(x)+t\nabla h(x),u\rangle
+ \tfrac{m-|t|M}{2}\|u\|^2
\ \ge\ (\cdots) + \tfrac{m}{4}\|u\|^2
\tag{2}
\]
for $|t|\le t_0$. In particular, $\phi+t h$ is $m/4$-strongly convex on $B(x,r)$; thus it is $C^{1,1}$ there and
\[
\nabla(\phi+t h)(x)= \nabla\phi(x)+t\,\nabla h(x).
\tag{3}
\]
\authors{Inequality (2) proves that $\phi+th$ is above a quadratic approximation on the ball, but this does not imply strong convexity on the ball. The end of the proof is wrong because it relies on this strong convexity on the ball. [...]}

\end{ChatGPTBox}  \begin{AuthorsBox}  You said: "General fact (finite dimension): if a scalar function 
$f:\mathbb{R}^d\to\mathbb{R}$ is twice Fréchet differentiable at a point 
$y_0$
 with second derivative 
$H$, then its gradient is strictly (Fréchet) differentiable at 
$y_0$
 with derivative 
$H$. Equivalently, (SD) holds with 
$B=H$. This is standard; see, e.g., Rockafellar–Wets, \textit{Variational Analysis}, §13.E (second-order expansion 
$\Rightarrow$ strict differentiability of the gradient)."

But it seems that Rockafellar–Wets gives strict differentiability of the function (not its gradient), correct?

\authors{Here, we challenge a statement made by the model in previous messages that are not reported here.}

\medskip

\end{AuthorsBox}  \begin{ChatGPTBox} You’re right to question that citation. 

\authors{[...]}

\subsubsection*{Route 1 (what we’re now doing): switch to the \textbf{primal path} $\phi_t=(\phi+t h)^{**}$}

For convex $\phi$ that is twice differentiable at $x$ with \textbf{positive-definite} Hessian $\nabla^2\phi(x)$, one has a \textbf{local strong convexity} around $x$. A simple quadratic upper/lower sandwich shows that, for $|t|$ small and $z$ near $x$, the convexification is \textbf{inactive}:
\[
(\phi+t h)^{**}=\phi+t h\quad\text{on a small ball around }x.
\]
\authors{The last equation is close to the key conjecture, but the statement is true at $x$ only and not in a neighborhood of $x$.}

Consequently,
\[
\nabla(\phi+t h)^{**}(x)=\nabla\phi(x)+t\,\nabla h(x) \quad\text{and}\quad
\frac{d}{dt}\Big|_{t=0}\nabla(\phi+t h)^{**}(x)=\nabla h(x)
\]
without any need for strict differentiability of $\nabla\phi^*$.  
\textit{This is the elementary local argument we already wrote out in detail (local strong convexity + “convexification doesn’t move the graph to first order”).}

\authors{[...]}

\subsection*{Bottom line for your specific question}

\begin{itemize}
    \item \textbf{Yes}, your suspicion is correct: RW §13.E does not directly give strict differentiability of the \textbf{gradient} from second-order differentiability of $f$. It gives the \textbf{second-order expansion of $f$} (equivalently, Fréchet differentiability of $\nabla f$ at a point), which is weaker than the pairwise estimate we labeled (SD).
    \item In our proof we no longer need that stronger (SD) fact, because with the \textbf{primal variation} $(\phi+t h)^{**}$ the first-order formula
    \[
    \nabla(\phi+t h)^{**}(x)=\nabla\phi(x)+t\,\nabla h(x)+o(t)
    \]
    follows from a local “convexification is inactive” lemma—an elementary argument that uses only convexity plus the positive-definite Hessian at $x$. This bypasses the delicate “strict differentiability of $\nabla\phi^*$” issue altogether.
\end{itemize}

\authors{[...]}

\medskip

\end{ChatGPTBox}

\begin{AuthorsBox}   Add more details for Route 1

\medskip

\end{AuthorsBox} \begin{ChatGPTBox}       Great—here is \textbf{Route 1} written out in full detail, with every “why” spelled out. The core idea is to vary along
\[
\phi_t \;:=\; (\phi + t\,h)^{**}\quad (t\ \text{small},\ h\in C_c^2(\mathbb R^d)),
\]
so we remain in the class of convex potentials, and to show that \textbf{to first order} the convexification is \textit{inactive} near almost every point $x$ where $\phi$ has a positive–definite Hessian. This yields
\[
\nabla\phi_t(x) \;=\; \nabla\phi(x) + t\,\nabla h(x) + o(t)
\quad(\mu_0\text{-a.e.}),
\]
i.e. the velocity field is simply $v_0(y)=\nabla h(\nabla\phi^*(y))$. We then use \textbf{OU smoothing after the pushforward} to compute the entropy derivative classically, and conclude as before.

Below: (0) the “good” set and why $\nabla^2\phi\succ0$ $\mu_0$-a.e.; (1) the local “convexification is inactive” lemma; (2) the gradient expansion and velocity; (3) why we still smooth, and why \textbf{after} the pushforward; (4) the one-sided derivatives and the limiting Stein identity; (5) the completion to the \textbf{standard} Stein identity.

\medskip
\noindent\rule{\textwidth}{0.4pt}

\subsection*{0) The “good” set and positive–definite Hessian a.e.}

\begin{itemize}
    \item $\mu_0\ll \mathcal L^d$ and $\nu_0:=\nabla\phi \#\mu_0\ll\mathcal L^d$ (since $\KL(\nu_0\mid\pi)\le \KL(\mu_0\mid\pi)<\infty$).  
    \item By Alexandrov, $\phi$ and $\phi^*$ are twice differentiable a.e.\ w.r.t.\ Lebesgue; since $\mu_0,\nu_0$ are a.c., we may pick a set $G\subset\mathbb R^d$ of full $\mu_0$-measure such that for all $x\in G$:
    \begin{itemize}
        \item $\nabla\phi(x)$ exists; set $y_0=\nabla\phi(x)$.
        \item $\nabla\phi^*(y_0)$ exists and equals $x$ (subgradient reciprocity + $\nu_0$-a.e.\ differentiability).
        \item $\nabla^2\phi(x)$ and $\nabla^2\phi^*(y_0)$ both exist. Differentiating $\nabla\phi^*(\nabla\phi(x))=x$ at such $x$ gives
        \[
        \nabla^2\phi^*(y_0)\,\nabla^2\phi(x)=I_d.
        \]
        Hence $\nabla^2\phi(x)\succ0$, $\nabla^2\phi^*(y_0)\succ0$, and they are inverses.
    \end{itemize}
\end{itemize}

We work pointwise on this full $\mu_0$-measure set.

\medskip
\noindent\rule{\textwidth}{0.4pt}

\subsection*{1) “Convexification is inactive” near $x$}

\textbf{Lemma (local inactivity of biconjugation).}  
Fix $x\in G$, let $H:=\nabla^2\phi(x)\succ0$ and $h\in C_c^2$. There exist $r>0$ and $t_0>0$ such that for all $|t|\le t_0$,
\[
(\phi+t h)^{**}(z)\;=\;\phi(z)+t\,h(z)\quad\text{for all }z\in B(x,r),
\]
and consequently
\[
\nabla(\phi+t h)^{**}(x)\;=\;\nabla\phi(x)+t\,\nabla h(x).
\]

\textbf{Proof (fully elementary).}

\begin{enumerate}
    \item \textit{Local quadratic lower bound for $\phi$.}  
    By Alexandrov at $x$, for small $r_1>0$,
    \[
    \phi(x+u)\ \ge\ \phi(x)+\langle \nabla\phi(x),u\rangle+\tfrac12\langle H u,u\rangle - \varepsilon_1\|u\|^2,\qquad \|u\|\le r_1,
    \]
    with $\varepsilon_1\in(0,\tfrac12\lambda_{\min}(H))$. Set $m:=\lambda_{\min}(H)-2\varepsilon_1>0$. Shrinking $r_1$ if needed,
    \[
    \phi(x+u)\ \ge\ \phi(x)+\langle \nabla\phi(x),u\rangle + \tfrac{m}{2}\|u\|^2,\qquad \|u\|\le r_1. \tag{1}
    \]
    
    \item \textit{Control $h$.}  
    Let $M:=\sup_{B(x,r_1)}\|\nabla^2 h\|<\infty$. For all $|t|\le t_0:=m/(2M)$,
    \[
    h(x+u)\ \ge\ h(x)+\langle \nabla h(x),u\rangle - \tfrac{M}{2}\|u\|^2.
    \]
    
    \item \textit{Strong convexity of $\phi+t h$ on $B(x,r_1)$.}  
    Combine (1) with the previous bound: for $|t|\le t_0$ and $\|u\|\le r_1$,
    \[
    (\phi+t h)(x+u)\ \ge\ (\phi+t h)(x)+\langle \nabla\phi(x)+t\nabla h(x),u\rangle + \tfrac{m-|t|M}{2}\|u\|^2
    \ \ge\ (\cdots)+\tfrac{m}{4}\|u\|^2. \tag{2}
    \]
    Thus $\phi+t h$ is $C^{1,1}$ and $\frac{m}{4}$-strongly convex on $B(x,r_1)$, and in particular
    \[
    \nabla(\phi+t h)(x)\;=\;\nabla\phi(x)+t\,\nabla h(x). \tag{3}
    \]
    
    \item \textit{Convexification does nothing near $x$.}  
    
    \authors{The argument below relies again on strong convexity on the ball, which is not true. [...]}
    
    Strong convexity on a ball implies a Jensen–variance inequality: for any convex combination $z=\sum\lambda_i z_i$ with $z_i\in B(x,r_1)$,
    \[
    \sum \lambda_i(\phi+t h)(z_i) \ \ge\ (\phi+t h)(z) + c\,\sum\lambda_i\|z_i-z\|^2,\qquad c:=\tfrac{m}{8}>0. \tag{4}
    \]
    Choose $r\in(0,r_1)$ small enough so that any convex combination with barycenter $z\in B(x,r)$ that places any mass outside $B(x,r_1)$ has $\sum\lambda_i\|z_i-z\|^2\ge \delta>0$ (geometric fact: points $\ge r_1$ away force positive variance when the barycenter is within $r$).  
    Then (4) shows that for $z\in B(x,r)$ the \textbf{infimum over all convex decompositions} occurs with all $z_i\in B(x,r_1)$. But on $B(x,r_1)$ the function $\phi+t h$ is already convex (indeed strongly convex), so its convex envelope coincides with itself. Equivalently, the biconjugate $(\phi+t h)^{**}$ equals $\phi+t h$ on $B(x,r)$. This proves the first assertion, and the gradient identity follows by differentiating $\phi+t h$ at $x$ (using (3)). \qed
\end{enumerate}

\textit{Remarks.}  
\begin{itemize}
    \item The constants $r,t_0$ depend on the point $x$, but the \textbf{conclusion holds for $\mu_0$-a.e.\ $x$}. For the derivative at $t=0$ this is exactly what we need.  
    \item The proof uses only convexity, Alexandrov’s second-order expansion at $x$, and a tiny Taylor bound for $h$; \textbf{no} assumption on $\phi^*$ is needed.
\end{itemize}

\medskip
\noindent\rule{\textwidth}{0.4pt}

\subsection*{2) Gradient expansion and the velocity}

From the lemma, for $\mu_0$-a.e.\ $x$ there exists $t_x>0$ such that, for all $|t|\le t_x$,
\[
\nabla\phi_t(x)\;=\;\nabla\phi(x)+t\,\nabla h(x).
\]
Thus the \textbf{Lagrangian velocity} at $t=0$ is $\dot y_0(x):=\partial_t|_{t=0}\nabla\phi_t(x)=\nabla h(x)$. Passing to Eulerian form with $y=\nabla\phi(x)$ and $x=\nabla\phi^*(y)$, we get the \textbf{Eulerian velocity}
\[
v_0(y)\;=\;\dot y_0\!\big(\nabla\phi^*(y)\big)\;=\;\nabla h\!\big(\nabla\phi^*(y)\big).
\]
Since $h\in C_c^2$, $\nabla h$ is bounded and compactly supported; hence $v_0$ is bounded and compactly supported as well.

\authors{[...]}

\end{ChatGPTBox} \begin{AuthorsBox}   Ok reexplain why the local quadratic lower bound implies equality between the function and its biconjugate, locally

\medskip

\end{AuthorsBox} \begin{ChatGPTBox}       Great question. The right statement to prove is:

\textbf{Claim.} Fix $x$ where $\phi$ is twice Fréchet differentiable with
$H:=\nabla^2\phi(x)\succ0$. For any $h\in C_c^2(\mathbb R^d)$ there exist
radii $r_1>0$ and a threshold $t_0>0$ such that, for all $|t|\le t_0$,
\[
g_t:=\phi+t\,h\quad\text{satisfies}\quad g_t^{**}(x)=g_t(x)
\quad\text{and}\quad \nabla g_t^{**}(x)=\nabla g_t(x).
\]
(In particular, the convexification is “inactive” at $x$, so the first-order
expansion $\nabla g_t^{**}(x)=\nabla\phi(x)+t\,\nabla h(x)+o(t)$ holds.)

Below is a \textbf{fully explicit proof} that uses only classical estimates,
Carathéodory convex combinations (finite supports), and the local strict
convexity of $\phi$ at $x$.

\medskip
\noindent\rule{\textwidth}{0.4pt}

\subsection*{Step 1 — Local strict convexity of $\phi$ at $x$}

By Alexandrov at $x$ and $H\succ0$, for any $\varepsilon\in(0,\lambda_*/2)$,
with $\lambda_*:=\lambda_{\min}(H)>0$, there exists $r_1>0$ such that
\[
\phi(x+u)\ \ge\ \phi(x)+\langle \nabla\phi(x),u\rangle
+\frac{\lambda_*-\varepsilon}{2}\,\|u\|^2,\qquad \|u\|\le r_1.
\tag{1}
\]
In particular, $\phi$ is \textbf{$m$-strongly convex} on $B(x,r_1)$ with
$m:=\lambda_*-\varepsilon>0$.

\authors{Incorrect again}

By continuity of the function
$u\mapsto \phi(x+u)-\phi(x)-\langle\nabla\phi(x),u\rangle$ and the strict
positivity of the quadratic term in (1), the \textbf{boundary gap}
\[
\delta_1\ :=\ \min_{\|u\|=r_1}\,\Big[\phi(x+u)-\phi(x)-\langle\nabla\phi(x),u\rangle\Big]
\]
is strictly positive: $\delta_1>0$. By convexity along rays, this implies
\[
\phi(y)\ \ge\ \phi(x)+\langle\nabla\phi(x),y-x\rangle+\delta_1
\quad\text{for all }\ \|y-x\|\ge r_1.
\tag{2}
\]

\medskip
\noindent\rule{\textwidth}{0.4pt}

\subsection*{Step 2 — Two global quadratic inequalities for $h$}

Since $h\in C_c^2$, its Hessian is bounded globally:
$\|\nabla^2 h\|_\infty\le M$ for some $M<\infty$. Hence the standard
“M-smooth” Taylor bounds hold \textbf{for all} $y\in\mathbb R^d$:
\[
\begin{aligned}
h(y)&\ge h(x)+\langle \nabla h(x),y-x\rangle-\frac{M}{2}\|y-x\|^2,\\
h(y)&\le h(x)+\langle \nabla h(x),y-x\rangle+\frac{M}{2}\|y-x\|^2.
\end{aligned}
\tag{3}
\]

\medskip
\noindent\rule{\textwidth}{0.4pt}

\subsection*{Step 3 — A Carathéodory/“mixture” lower bound for $g_t^{**}(x)$}

By finite-dimensional convex analysis (Carathéodory), one has
\[
g_t^{**}(x)\ =\ \inf\Big\{\textstyle\sum_{i=1}^{m}\lambda_i\,g_t(y_i)\ :\
\sum_{i=1}^m\lambda_i=1,\ \sum_{i=1}^m\lambda_i y_i=x,\ m\le d+1\Big\}.
\tag{4}
\]
Fix any such finite decomposition of $x$ and split the index set into
\[
I_{\rm in}:=\{i:\ \|y_i-x\|<r_1 \},\qquad I_{\rm out}:=\{i:\ \|y_i-x\|\ge r_1 \},
\]
with total outside mass $\Lambda_{\rm out}:=\sum_{i\in I_{\rm out}}\lambda_i$.

We now \textbf{lower-bound} the average $\sum \lambda_i g_t(y_i)$ in terms of
$\Lambda_{\rm out}$ and the “variance” of the inside points.

For $i\in I_{\rm in}$, combine (1) and the left inequality in (3):
\[
\begin{aligned}
g_t(y_i)
&= \phi(y_i)+t\,h(y_i)\\
&\ge \phi(x)+t\,h(x)+\langle \nabla\phi(x)+t\nabla h(x),y_i-x\rangle
+ \frac{m-|t|M}{2}\,\|y_i-x\|^2.
\end{aligned}
\tag{5}
\]

For $i\in I_{\rm out}$, use (2) and the same left inequality in (3):
\[
\begin{aligned}
g_t(y_i)
&= \phi(y_i)+t\,h(y_i)\\
&\ge \phi(x)+\langle \nabla\phi(x),y_i-x\rangle+\delta_1
+ t\big(h(x)+\langle \nabla h(x),y_i-x\rangle\big)
- \frac{|t|M}{2}\,\|y_i-x\|^2.
\end{aligned}
\tag{6}
\]

Summing (5)–(6) with weights $\lambda_i$, the \textbf{barycenter constraint}
$\sum \lambda_i(y_i-x)=0$ cancels all linear terms, giving
\[
\sum_{i}\lambda_i g_t(y_i)
\ \ge\ (\phi(x)+t\,h(x))
+ \frac{m-|t|M}{2}
\sum_{i\in I_{\rm in}}\lambda_i\|y_i-x\|^2
+ \Lambda_{\rm out}\Big(\delta_1 - \frac{|t|M}{2}r_1^2\Big).
\tag{7}
\]

\medskip
\noindent\rule{\textwidth}{0.4pt}

\subsection*{Step 4 — Choose $t_0$ so both extra terms are nonnegative}

Pick
\[
t_0\ :=\ \min\Big\{\frac{m}{M},\ \frac{2\delta_1}{M r_1^2}\Big\}>0.
\]
Then for all $|t|\le t_0$,
\[
\frac{m-|t|M}{2}\ \ge\ 0,\qquad
\delta_1 - \frac{|t|M}{2}r_1^2\ \ge\ 0.
\]
Hence from (7),
\[
\sum_{i}\lambda_i g_t(y_i)\ \ge\ g_t(x)
\quad\text{for every admissible convex decomposition in (4)}.
\]
Taking the infimum over all such decompositions gives
\[
g_t^{**}(x)\ \ge\ g_t(x).
\]
Since always $g_t^{**}\le g_t$, we conclude
\[
\boxed{\,g_t^{**}(x)=g_t(x)\,}\qquad\text{for all }\ |t|\le t_0.
\]

Moreover, equality in (7) forces $\Lambda_{\rm out}=0$ and
$\sum_{i\in I_{\rm in}}\lambda_i\|y_i-x\|^2=0$, i.e., the minimizing
decomposition is the trivial one $y_1=x$. That implies
$\partial g_t^{**}(x)= {\nabla g_t(x) }$ and hence
\[
\boxed{\,\nabla g_t^{**}(x)=\nabla g_t(x)\,}
\]
for $|t|\le t_0$. In particular,
\[
\nabla g_t^{**}(x)\ =\ \nabla\phi(x)+t\,\nabla h(x)\quad\text{to first order.}
\]

\medskip
\noindent\rule{\textwidth}{0.4pt}

\subsection*{What we used (and why it’s legitimate)}

\begin{itemize}
    \item \textbf{Alexandrov + $\nu_0$-a.e.\ strictness:} At $\mu_0$-a.e.\ $x$, $\nabla^2\phi(x)$
    exists and is positive definite (by differentiating
    $\nabla\phi^*(\nabla\phi(x))=x$ at those points where both Hessians exist).
    This gives (1) and the \textbf{strict boundary gap} $\delta_1>0$ in (2).
    \item \textbf{Global $C^2$ control on $h$:} $h\in C_c^2$ implies a \textbf{global}
    Hessian bound $M$, hence the quadratic Taylor bounds (3) hold \textbf{for all}
    points $y$.
    \item \textbf{Carathéodory representation:} In finite dimension, the biconjugate at a
    point equals the infimum of \textbf{finite} convex combinations of function values
    with that barycenter. This allowed us to reduce to a simple algebraic estimate
    (7) that cleanly separates “inside” and “outside” contributions.
    \item \textbf{Smallness of $t$:} Picking $t_0$ as in Step 4 ensures the positive
    terms (coming from local strong convexity of $\phi$ and the boundary gap
    $\delta_1$) dominate the possible negative quadratic defect coming from the
    $t\,h$-perturbation.
\end{itemize}

\medskip
\noindent\rule{\textwidth}{0.4pt}

\subsection*{Takeaway}

The local quadratic bound does \textbf{not} automatically give $g_t=g_t^{**}$
on an entire ball, but it \textbf{does} guarantee equality \textbf{at the point $x$} for
all small $|t|$. 

\authors{Now, the statement is correct, but the proof is not. This is an important progress in our opinion: the most difficult part to prove Equation~\eqref{eq:us} was actually to make the right conjecture (\textit{i.e.}, the fact that $o(t) = 0$ for $t$ small enough, depending on $x$). Once the right conjecture was written, the result was rather easy to prove.}

That is exactly what we need: the biconjugate is inactive at
$x$ to first order, hence
\[
\nabla(\phi+t h)^{**}(x)=\nabla\phi(x)+t\,\nabla h(x)+o(t),
\]
which is the key input for the velocity computation and the entropy variation.

\medskip

\end{ChatGPTBox} \begin{AuthorsBox}   Can we lower bound the hessian a.e on a ball around $x$?

\medskip

\end{AuthorsBox} \begin{ChatGPTBox}       Short answer: \textbf{in general, no.}  
From $ \nabla^2\phi(x)=H\succ0 $ at a \emph{single} Alexandrov point $x$, you \textbf{cannot} conclude there exists $r>0$ and $m>0$ such that
\[
\nabla^2\phi(y)\succeq m I \quad\text{for a.e. }y\in B(x,r).
\]
Convexity gives $ \nabla^2\phi(y)\succeq 0 $ a.e., but a \textbf{strictly positive uniform lower bound a.e. on a whole ball} need not hold without extra smoothness (e.g. $C^2$ in a neighborhood).

\medskip
\noindent\rule{\textwidth}{0.4pt}

\subsection*{Why a uniform a.e. lower bound need not hold}

\begin{itemize}
    \item For a convex function $ \phi$, the distributional Hessian $D^2\phi$ is a symmetric \emph{positive} matrix-valued measure. The a.e. “Alexandrov Hessian” $ \nabla^2\phi \in L^1_{\rm loc}$ is the density of its absolutely continuous part; there may be a singular part (e.g. in 1D, $\phi(x)=|x|$ has a delta at $0$).  
    \item At a point $x$ where $\phi$ is twice Fréchet differentiable and $H=\nabla^2\phi(x)\succ0$, you have the \emph{pointwise} second-order expansion, but this does \textbf{not} force $ \nabla^2\phi(y)$ to stay uniformly away from $0$ a.e. on an entire neighborhood.  
    Indeed, in 1D one can construct $\phi$ with $\phi''(0)=1$ while in every neighborhood of $0$ there is a set of positive measure where $\phi''=0$.  

    \textit{Sketch:} pick a measurable set $E\subset(-r,r)$ with zero Lebesgue density at $0$ but $|E\cap(-\rho,\rho)|>0$ for all $\rho>0$. Let $g=1_{(-r,r)\setminus E}\ge0$ and define $\phi(x)=\int_0^x\!\int_0^s g(t)\,dt\,ds$. Then $\phi$ is convex, $\phi''=g$ a.e., $\phi''(0)$ exists and equals $1$ (Lebesgue point), but in every neighborhood $\phi''$ vanishes on a set of positive measure. Thus no uniform $m>0$ bound a.e. on a whole ball.
\end{itemize}

So without continuity of $\nabla^2\phi$ (e.g. $\phi\in C^2$ near $x$), you cannot expect such a bound.

\medskip
\noindent\rule{\textwidth}{0.4pt}

\subsection*{What \emph{does} hold (and is enough for our proof)}

You can get \textbf{approximate lower bounds “in measure”} by Lebesgue differentiation:

\textbf{Approximate lower bound.}  
If $\phi$ is twice differentiable at $x$ with $H\succ0$ and $\lambda_*=\lambda_{\min}(H)$, then for every $\varepsilon,\eta>0$ there exists $r>0$ such that
\[
\big|\big\{y\in B(x,r):\ \lambda_{\min}\big(\nabla^2\phi(y)\big)\ge \lambda_*-\varepsilon\big\}\big|
\ \ge\ (1-\eta)\,|B(x,r)|.
\]
\textit{Reason:} each matrix entry of $\nabla^2\phi$ is in $L^1_{\rm loc}$ and has value $H$ at $x$ as a Lebesgue differentiation point; for small balls, the average matrix is close to $H$. By Chebyshev and continuity of eigenvalues in the entries, all but an $\eta$-fraction of points have $\nabla^2\phi(y)$ within $\varepsilon$ of $H$, hence $\lambda_{\min}(\nabla^2\phi(y))\ge \lambda_*-\varepsilon$.

This “almost everywhere up to small proportion” is the right statement at the level of general convexity.

\medskip
\noindent\rule{\textwidth}{0.4pt}

\subsection*{Crucially, we never needed a uniform a.e. bound}

In our Route 1 proof:
\begin{itemize}
    \item We used only the \textbf{pointwise} second-order expansion at $x$ and $\lambda_*>0$ to get the \textbf{one-point} quadratic lower bound
    \[
    \phi(x+u)\ \ge\ \phi(x)+\langle \nabla\phi(x),u\rangle+\tfrac{\lambda_*-\varepsilon}{2}\|u\|^2
    \quad\text{for small }u,
    \]
    which is a statement \textbf{around the base point $x$}.
    \item That one-point inequality sufficed to prove the “convexification is inactive at $x$” for $(\phi+th)^{**}$, via a Carathéodory/variance estimate on convex combinations whose \textbf{barycenter is $x$}. No global strong convexity (and hence no uniform Hessian lower bound on the ball) was required.
\end{itemize}

\medskip
\noindent\rule{\textwidth}{0.4pt}

\subsection*{When can you \emph{do} it?}

If you assume \textbf{$C^2$ regularity} (or even continuity of $\nabla^2\phi$) on a neighborhood of $x$, then yes: by continuity there exists $r>0$ such that
\[
\nabla^2\phi(y)\ \succeq\ (\lambda_*-\varepsilon)\,I \quad\text{for all }y\in B(x,r),
\]
which yields genuine \textbf{strong convexity on the ball} and the classical Jensen–variance inequality for all barycenters $z\in B(x,r)$. But this is an extra hypothesis we did not (and do not need to) impose.

\medskip
\noindent\rule{\textwidth}{0.4pt}

\subsection*{Bottom line}

\begin{itemize}
    \item \textbf{No} general uniform a.e. lower bound on a whole ball can be deduced from $ \nabla^2\phi(x)\succ0$ at a single point.  
    \item You \textbf{can} get an approximate lower bound in measure via Lebesgue differentiation.  
    \item Our argument is designed to avoid needing any such uniform bound: the one-point quadratic lower bound at $x$ is enough to keep the convexification inactive at $x$ and to continue with the entropy variation.
\end{itemize}

\medskip

\end{ChatGPTBox} \begin{AuthorsBox}   OK. Is there a reference for the expansion of $(\phi + th)^{**}$? Because we know it is smaller than $(\phi + th)$ so I expect some mathematicians studied the order of the difference between the two? Or maybe a reference on local equality between the function and its biconjugate?

\medskip

\end{AuthorsBox} \begin{ChatGPTBox}       Great question. There isn’t (to my knowledge) a standalone “named theorem” that gives a ready-made \textbf{asymptotic expansion for $(\phi+th)^{**}$} in general, but there are two standard routes that together yield exactly what you need:

\medskip
\noindent\rule{\textwidth}{0.4pt}

\subsection*{1) Local equality with the convex envelope near points of strong local convexity}

\textbf{Fact (local inactivity of convexification).}  
Let $\phi:\mathbb{R}^d\to\mathbb{R}\cup\{+\infty\}$ be proper, lsc, convex. Fix $x_0\in\mathrm{int}(\mathrm{dom}\,\phi)$ with $\phi\in C^2$ at $x_0$ and $\nabla^2\phi(x_0)\succ0$. Then there exist $r>0$ and $t_0>0$ such that for all $|t|\le t_0$,
\[
(\phi+th)^{**}(x)=\phi(x)+t\,h(x)\quad\text{for all }x\in B_r(x_0),
\]
whenever $h$ is $C^2$ on a neighborhood of $x_0$.

\textbf{Why this is true.}  
Positive definiteness at $x_0$ implies a \textbf{local quadratic lower bound}
\[
\phi(x_0+u)\;\ge\;\phi(x_0)+\langle\nabla\phi(x_0),u\rangle+\tfrac{m}{2}\|u\|^2
\quad\text{for small }\|u\|
\]
for some $m>0$. If $\|\nabla^2 h\|_\infty$ is bounded near $x_0$, then for $|t|$ small enough we still have
\[
\phi(x)+t\,h(x)\;\ge\;\text{(affine support at $x_0$)}+\tfrac{m'}{2}\|x-x_0\|^2
\]
on a small ball around $x_0$ with $m'>0$. Hence $\phi+th$ is \textbf{convex on that ball}, \authors{Same error again} so its \textbf{convex envelope coincides with itself} there. 

This is a one-line consequence of the \textbf{Fenchel–Moreau theorem} (biconjugate is the lsc convex envelope) together with the representation of convex envelopes via \textbf{Carathéodory’s theorem} (any point in the convex hull uses at most $d+1$ points): if a function is already convex on a neighborhood $U$, then its convex envelope equals the function on a slightly smaller neighborhood $U'\subset U$. For background, see:
\begin{itemize}
    \item Rockafellar, \textit{Convex Analysis} (1970), Ch. 2: Fenchel–Moreau Theorem;
    \item Rockafellar–Wets, \textit{Variational Analysis} (1998), §2 and §11;
    \item Borwein–Lewis, \textit{Convex Analysis and Nonlinear Optimization} (Springer, 2006), §2.3–2.4.
\end{itemize}

\textbf{Consequence (first-order expansion).}  
From local equality,
\[
(\phi+th)^{**}(x_0)=\phi(x_0)+t\,h(x_0)\quad\text{for all $|t|\le t_0$,}
\]
hence $(\phi+th)^{**}(x_0)=\phi(x_0)+t\,h(x_0)+o(t)$ with actually zero error for $|t|$ small. Differentiating in $x$, you likewise get
\[
\nabla\big((\phi+th)^{**}\big)(x_0)=\nabla\phi(x_0)+t\,\nabla h(x_0)
\quad\text{for $|t|$ small,}
\]
because gradients match at points where the convex envelope equals the original function and both are $C^1$.

This is the simplest route if, as in your application, you only need the behavior at a.e. $x$ where $\phi$ is $C^2$ and $\nabla^2\phi\succ0$ (which is a \textbf{local} assumption, not a global strong convexity).

\authors{[...]}

\medskip

\end{ChatGPTBox} \begin{AuthorsBox}   I don't understand why $\phi+th$ is convex on that ball in 1. We should have the inequality for every $x$ in the ball.  
I don't understand how you obtain expansion of the biconjugate via Fenchel–Moreau in 2.

Please check. If you made no error, explain in details

\medskip

\end{AuthorsBox} \begin{ChatGPTBox}       You’re right to press on both points. Let me (i) correct the earlier wording about “convex on the ball”, and (ii) give a \textbf{fully detailed, self-contained proof} that for small $|t|$ we have
\[
(\phi+th)^{**}(x)=\phi(x)+t\,h(x)
\quad\text{and}\quad
\nabla(\phi+th)^{**}(x)=\nabla\phi(x)+t\,\nabla h(x),
\]
\textbf{at the single base point $x$} where $\phi$ has a positive-definite Alexandrov Hessian. This is all we need for the first variation.

\medskip
\noindent\rule{\textwidth}{0.4pt}

\subsection*{1) Clarification: we do \textbf{not} need $ \phi+th $ convex on a whole ball}

What we used—and what is true from Alexandrov— is a \textbf{one-point quadratic lower support} at $x$:
\[
\phi(x+u)\ \ge\ \phi(x)+\langle\nabla\phi(x),u\rangle+\frac{\lambda_*-\varepsilon}{2}\,\|u\|^2
\quad\text{for small }u,
\]
where $\lambda_*=\lambda_{\min}(\nabla^2\phi(x))>0$. This does \textbf{not} say $\phi$ (or $\phi+th$) is strongly convex on an entire ball; 
\authors{The model finally corrected its mistake, after suggestion}
and we don’t need that. The one-point inequality is enough to rule out any improvement by convexification \textbf{at the single point $x$}, via a Carathéodory/variance argument.

So please ignore the earlier shorthand “$\phi+th$ is strongly convex on $B$”; the precise and sufficient statement is the \textbf{one-point} bound above.

\medskip
\noindent\rule{\textwidth}{0.4pt}

\subsection*{2) Local equality with the biconjugate at $x$ (and gradient equality)}

\textbf{Setting.} Let $\phi:\mathbb{R}^d\to(-\infty,+\infty]$ be proper, lsc, convex. Fix a point $x\in\mathrm{int}(\mathrm{dom}\,\phi)$ where $\phi$ is twice Fréchet differentiable with $H:=\nabla^2\phi(x)\succ0$. Let $h\in C^2(\mathbb{R}^d)$ (compact support is fine but not essential here), and define $g_t:=\phi+th$.

\textbf{Goal.} Show that for all sufficiently small $|t|$,
\[
g_t^{**}(x)=g_t(x)
\quad\text{and}\quad
\nabla g_t^{**}(x)=\nabla g_t(x)=\nabla\phi(x)+t\,\nabla h(x).
\]

\subsubsection*{Step 2.1 — One-point quadratic lower support and “boundary gap”}

From Alexandrov at $x$, for any $\varepsilon\in(0,\lambda_*/2)$ there is $r_1>0$ such that for all $\|u\|\le r_1$,
\[
\phi(x+u)\ \ge\ \phi(x)+\langle\nabla\phi(x),u\rangle+\frac{m}{2}\|u\|^2,
\quad m:=\lambda_*-\varepsilon>0. \tag{LQ}
\]
By continuity of $u\mapsto \phi(x+u)-\phi(x)-\langle\nabla\phi(x),u\rangle$, the \textbf{boundary gap}
\[
\delta_1\ :=\ \min_{\|u\|=r_1}\big [\phi(x+u)-\phi(x)-\langle\nabla\phi(x),u\rangle\big ]
\]
is strictly positive. By convexity along rays, this yields
\[
\phi(y)\ \ge\ \phi(x)+\langle\nabla\phi(x),y-x\rangle+\delta_1
\quad\text{for all }\ \|y-x\|\ge r_1. \tag{BG}
\]

For $h$, let $M:=\sup_{B(x,r_1)}\|\nabla^2 h\|$. Then the standard quadratic Taylor bounds give, for all $y$ with $\|y-x\|\le r_1$,
\[
h(y)\ \ge\ h(x)+\langle\nabla h(x),y-x\rangle-\frac{M}{2}\|y-x\|^2, \tag{T$_-$}
\]
and globally,
\[
h(y)\ \ge\ h(x)+\langle\nabla h(x),y-x\rangle-\frac{M}{2}\|y-x\|^2
\quad\text{(always valid with this $M$ on the ball).} \tag{T$_-$' }
\]

\subsubsection*{Step 2.2 — Carathéodory/variance lower bound for $g_t^{**}(x)$}

In $\mathbb{R}^d$, Carathéodory implies that the convex envelope at a point can be computed using \textbf{finite} convex combinations: for any $\eta>0$ there are $m\le d+1$, points $y_i$, and coefficients $\lambda_i\ge 0$ summing to 1 with $\sum\lambda_i y_i=x$ such that
\[
g_t^{**}(x)\ \ge\ \sum_{i=1}^m \lambda_i\,g_t(y_i) - \eta.
\]
(You can take $\eta=0$ if you accept “infimum over all such finite representations”.)

Split indices into
\[
I_{\rm in}:=\{i:\ \|y_i-x\|<r_1 \},\qquad I_{\rm out}:=\{i:\ \|y_i-x\|\ge r_1 \},
\]
and let $\Lambda_{\rm out}:=\sum_{i\in I_{\rm out}}\lambda_i$.

\begin{itemize}
    \item For $i\in I_{\rm in}$, combine (LQ) and (T$_-$):
    \[
    \begin{aligned}
    g_t(y_i) &= \phi(y_i)+t\,h(y_i) \\
             &\ge \phi(x)+t\,h(x)+\langle\nabla\phi(x)+t\nabla h(x),y_i-x\rangle
                 +\frac{m-|t|M}{2}\,\|y_i-x\|^2.
    \end{aligned}
    \]
    \item For $i\in I_{\rm out}$, use (BG) and (T$_-$'):
    \[
    \begin{aligned}
    g_t(y_i)&\ge \phi(x)+\langle\nabla\phi(x),y_i-x\rangle+\delta_1
                 + t\big(h(x)+\langle\nabla h(x),y_i-x\rangle\big)
                 -\frac{|t|M}{2}\,\|y_i-x\|^2. 
    \end{aligned}
    \]
\end{itemize}

Sum with weights $\lambda_i$; the barycenter constraint $\sum\lambda_i(y_i-x)=0$ cancels all linear terms. Using $\|y_i-x\|\ge r_1$ for $i\in I_{\rm out}$, we obtain
\[
\sum_i \lambda_i g_t(y_i)
\ \ge\ g_t(x)
  + \frac{m-|t|M}{2}\,\sum_{i\in I_{\rm in}}\lambda_i\|y_i-x\|^2
  + \Lambda_{\rm out}\Big(\delta_1-\tfrac{|t|M}{2}\,r_1^2\Big). \tag{*}
\]
\authors{But, one cannot lower bound $-\tfrac{|t|M}{2}\,\|y_i - x\|^2 \geq -\tfrac{|t|M}{2}\,r_1^2$ because $y_i$ is outside the ball ($\|y_i-x\|\ge r_1$). Therefore, the end of the proof is wrong.}

Choose
\[
t_0\ :=\ \min\Big \{\frac{m}{M},\,\frac{2\delta_1}{M r_1^2}\Big \}>0.
\]
Then for all $|t|\le t_0$, both extra terms on the right of (*) are $\ge 0$, hence $\sum_i \lambda_i g_t(y_i)\ge g_t(x)$. Taking the infimum over all such finite convex representations gives
\[
g_t^{**}(x)\ \ge\ g_t(x).
\]
Since always $g_t^{**}\le g_t$, we conclude
\[
\boxed{\,g_t^{**}(x)=g_t(x)\quad\text{for all }|t|\le t_0. \,}
\]

Moreover, equality in (*) forces $\Lambda_{\rm out}=0$ and
$\sum_{i\in I_{\rm in}}\lambda_i\|y_i-x\|^2=0$, i.e. the only minimizing representation is the trivial one $y_1=x$. This implies $\partial g_t^{**}(x)= {\nabla\phi(x)+t\nabla h(x) }$ (see next step), i.e. gradient equality.

\subsubsection*{Step 2.3 — Gradient equality at $x$}

Since $g_t^{**}\le g_t$ and $g_t^{**}(x)=g_t(x)$, we have the subdifferential inclusion
\[
\partial g_t^{**}(x)\ \subset\ \partial g_t(x).
\]
(Reason: if $p\in\partial g_t^{**}(x)$, then $g_t^{**}(z)\ge g_t^{**}(x)+\langle p,z-x\rangle$, so $g_t(z)\ge g_t^{**}(z)\ge g_t(x)+\langle p,z-x\rangle$, i.e. $p\in\partial g_t(x)$.)

But $g_t$ is differentiable at $x$ (sum of differentiable $\phi$ at $x$ and $h$), so $\partial g_t(x)= {\nabla g_t(x) }= {\nabla\phi(x)+t\nabla h(x) }$. Thus $\partial g_t^{**}(x)$ is a singleton and equals that same vector; in particular $g_t^{**}$ is differentiable at $x$ and
\[
\boxed{\,\nabla g_t^{**}(x)=\nabla g_t(x)=\nabla\phi(x)+t\,\nabla h(x).\,}
\]
\authors{[...]}

\medskip

\end{ChatGPTBox} \begin{AuthorsBox}   The problem is that I don't trust the biconjugate inactive lemma because it would mean that $\nabla(\phi + t h)$ is monotone and therefore $\phi + t h$ is convex.

\authors{Here we didn't realize that the model's statement was pointwise at $x$, and not on a neighborhood of $x$. But the model's proof is still wrong at this stage (because of the Carathéodory argument, see our comment above).}

\medskip

\end{AuthorsBox} \begin{ChatGPTBox}       You’re absolutely right to be cautious: \textbf{“biconjugate inactive at $x$” does *not* mean $\nabla(\phi+t h)$ is (cyclically) monotone on a neighborhood, nor that $\phi+t h$ is convex there.}  

What we proved—and what we actually need—is strictly weaker:

\medskip
\textbf{Pointwise inactivity at the base point.}  
If $\phi$ is twice Fréchet differentiable at $x$ with $\nabla^2\phi(x)\succ0$ and $h\in C^2$, then for $|t|$ small
\[
(\phi+t h)^{**}(x)=\phi(x)+t\,h(x)\quad\text{and}\quad
\nabla(\phi+t h)^{**}(x)=\nabla\phi(x)+t\,\nabla h(x).
\]
This tells you what happens \textbf{at the point $x$} (and for the derivative at $t=0$), not on a whole neighborhood, and certainly not globally.

\medskip
\noindent\rule{\textwidth}{0.4pt}

\subsection*{Why this does \textbf{not} force $\phi+t h$ to be convex (or $\nabla(\phi+t h)$ to be monotone)}

\begin{itemize}
    \item Convexity (equivalently, monotonicity of the gradient when $C^1$) is a \textbf{pairwise/global} property:
    \[
    \langle \nabla(\phi+t h)(x)-\nabla(\phi+t h)(z),\,x-z\rangle\ \ge 0
    \quad\forall x,z.
    \]
    Our statement gives equality of $(\phi+t h)^{**}$ and $\phi+t h$ \textbf{only at the single point $x$} (and equality of their gradients there). It says nothing about values at other points $z$, and therefore nothing about all such pairwise inequalities.
    \item It is easy to cook up $C^2$ functions $h$ for which $\phi+t h$ is convex \textbf{near $x$} (thanks to the positive curvature of $\phi$ at $x$) but becomes \textbf{non-convex far away}. In that case $(\phi+t h)^{**}$ coincides with $(\phi+t h)$ at $x$ (and even on a tiny neighborhood if you assume a bit more regularity), yet $\phi+t h$ is not convex globally and its gradient is not a monotone map on $\mathbb{R}^d$.
\end{itemize}

\medskip
\noindent\rule{\textwidth}{0.4pt}

\subsection*{What the lemma actually proved (and how it avoids any hidden convexity assumption)}

\begin{enumerate}
    \item \textbf{One-point quadratic lower support at $x$:}  
    From Alexandrov and $\nabla^2\phi(x)\succ0$, for small $\|u\|$,
    \[
    \phi(x+u)\ \ge\ \phi(x)+\langle\nabla\phi(x),u\rangle+\tfrac{m}{2}\|u\|^2,
    \quad m>0.
    \]
    This is a statement about $x$, not about every point in a ball.

    \item \textbf{Carathéodory/variance estimate at the barycenter $x$:}  
    For any \textbf{finite} convex combination $\sum_i \lambda_i y_i=x$,
    the above one-point inequality and a quadratic bound on $h$ imply
    \[
    \sum_i \lambda_i\big(\phi(y_i)+t h(y_i)\big)\ \ge\ \phi(x)+t h(x),
    \]
    provided $|t|$ is small enough. (Outside points cost a fixed “boundary gap”; inside points cost a quadratic “variance”.)

    \item \textbf{Take the infimum over such finite mixtures:}  
    By Fenchel–Moreau (biconjugate = convex envelope) and Carathéodory (finite support suffices in $\mathbb{R}^d$), we get
    \[
    (\phi+t h)^{**}(x)\ \ge\ \phi(x)+t h(x).
    \]
    Since always $(\phi+t h)^{**}\le \phi+t h$, this yields \textbf{equality at $x$}.

    \item \textbf{Subgradient inclusion $\Rightarrow$ gradient equality at $x$:}  
    From $f^{**}\le f$ and $f^{**}(x)=f(x)$ one has $\partial f^{**}(x)\subset\partial f(x)$. Here $f=\phi+t h$ is differentiable at $x$, so $\partial f(x)= {\nabla f(x) }$, hence $\partial f^{**}(x)= {\nabla f(x) }$ and
    \[
    \nabla(\phi+t h)^{**}(x)=\nabla(\phi+t h)(x)=\nabla\phi(x)+t\nabla h(x).
    \]
\end{enumerate}

\medskip
\noindent\rule{\textwidth}{0.4pt}

\subsection*{Notice how no convexity on a neighborhood is assumed}

Nowhere did we assume $\phi+t h$ is convex on a ball, nor that $\nabla(\phi+t h)$ is monotone. We only used:

\begin{itemize}
    \item a \textbf{one-point} curvature lower bound at $x$,
    \item a \textbf{finite-mixture} (Carathéodory) representation to control convexification \textbf{at that barycenter}, and
    \item the general inequality chain $f^{**}\le f$ paired with subdifferential inclusion at a point of equality.
\end{itemize}

That’s exactly tailored to what we need for the first variation:
we vary along $\phi_t=(\phi+t h)^{**}$, and at $\mu_0$-a.e.\ point $x$ the \textbf{directional derivative in $t$} of $\nabla\phi_t(x)$ is $\nabla h(x)$. Passing to Eulerian variables gives the velocity $v_0(y)=\nabla h(\nabla\phi^*(y))$ $\nu_0$-a.e., and the rest of the argument (OU smoothing after pushforward $\rightarrow$ classical chain rule $\rightarrow$ Stein identity $\rightarrow$ $\nu_0=\pi$) goes through—without ever needing global convexity of $\phi+t h$ or monotonicity of its gradient.

If you \textit{do} want \textbf{equality on a whole neighborhood} (not just at $x$), you need extra regularity: e.g., if $\nabla^2\phi$ is continuous at $x$ so that $\nabla^2\phi(y)\succeq (\lambda_{\min}(\nabla^2\phi(x))-\varepsilon)I$ for all $y$ in a small ball, then for small $|t|$ you can ensure $\phi+t h$ is \textbf{strongly convex} on that ball and hence coincides with its biconjugate there. But that strengthening is \textbf{not required} for the proof—pointwise equality at $x$ is sufficient.

\medskip

\end{ChatGPTBox} \begin{AuthorsBox}   I see. You proved that $\nabla (\phi + t h)^{**}(x) = \nabla (\phi + t h)(x)$ for a.e.\ $x$, but how $t$ is small depends on $x$.

So really $t = t(x)$.

\medskip

\end{AuthorsBox} \begin{ChatGPTBox}       You’re absolutely right: with the \textbf{local convex-envelope lemma} we proved,
for each \emph{fixed} $x$ in a full $\mu_0$-measure set there exists a (pointwise)
radius $t_x>0$ such that
\[
\nabla\big(\phi_t\big)(x)=\nabla(\phi+t h)^{**}(x)
=\nabla\phi(x)+t\,\nabla h(x)\qquad\text{for all }|t|\le t_x,
\]
but $t_x$ \textbf{does depend on $x$}. Below I explain why this dependence causes
no problem for the variation we need, and how to make the key steps rigorous
without any uniform $t$-window.

\authors{[...]}

\end{ChatGPTBox} \begin{AuthorsBox}   Do you have a simpler proof for the local convex envelope lemma?

\medskip

\end{AuthorsBox} \begin{ChatGPTBox}       Yes—there’s a short “one-page” proof that avoids the epigraph geometry and keeps only the bare minimum of estimates. It proves \textbf{pointwise} inactivity of the convex envelope at your base point $x$ (so it never claims $\phi+th$ is convex on a neighborhood).

\medskip
\noindent\rule{\textwidth}{0.4pt}

\subsection*{Local convex–envelope lemma (short proof)}

\textbf{Claim.}  
Let $\phi:\mathbb R^d\to(-\infty,+\infty]$ be proper, lsc, convex. Fix a point $x\in\mathrm{int}(\mathrm{dom}\,\phi)$ where $\phi$ is twice Fréchet differentiable with $H:=\nabla^2\phi(x)\succ0$. Let $h\in C^2(\mathbb R^d)$ (compact support is fine). Then there exists $t_x>0$ such that, for all $|t|\le t_x$,
\[
(\phi+t h)^{**}(x)=\phi(x)+t\,h(x)
\quad\text{and}\quad
\nabla(\phi+t h)^{**}(x)=\nabla\phi(x)+t\,\nabla h(x).
\]

\textbf{Why this does *not* assert convexity of $\phi+th$:} It only claims equality \textbf{at the single point $x$}; it says nothing about other points, so it doesn’t force monotonicity of $\nabla(\phi+th)$.

\medskip
\noindent\rule{\textwidth}{0.4pt}

\subsubsection*{Step 1 — One–point quadratic support for $\phi$; boundary gap}

Let $\lambda_*:=\lambda_{\min}(H)>0$. From Alexandrov at $x$, for any small $\varepsilon\in(0,\lambda_*/2)$ there exists $r>0$ such that
\[
\phi(x+u)\ \ge\ \phi(x)+\langle \nabla\phi(x),u\rangle+\frac{m}{2}\|u\|^2
\quad\text{for }\|u\|\le r, \qquad m:=\lambda_*-\varepsilon>0.
\tag{LQ}
\]
By continuity of $u\mapsto \phi(x+u)-\phi(x)-\langle \nabla\phi(x),u\rangle$ and $m>0$, the “boundary gap”
\[
\delta\ :=\ \min_{\|u\|=r}\Big(\phi(x+u)-\phi(x)-\langle \nabla\phi(x),u\rangle\Big)
\]
satisfies $\delta>0$. Convexity along rays then yields
\[
\phi(y)\ \ge\ \phi(x)+\langle \nabla\phi(x),y-x\rangle+\delta
\quad\text{whenever }\|y-x\|\ge r.
\tag{BG}
\]

Let $M:=\sup_{\|y-x\|\le r}\|\nabla^2 h(y)\|<\infty$. Then the standard quadratic Taylor bound holds on $B(x,r)$:
\[
h(y)\ \ge\ h(x)+\langle \nabla h(x),y-x\rangle-\frac{M}{2}\|y-x\|^2.
\tag{T}
\]

\medskip
\noindent\rule{\textwidth}{0.4pt}

\subsubsection*{Step 2 — Test any convex decomposition of $x$}

By Carathéodory (or the usual finite “barycentric” representation of the convex envelope), it suffices to check all \textbf{finite} convex combinations
\[
x=\sum_{i=1}^m\lambda_i y_i,\qquad \lambda_i\ge 0,\ \sum\lambda_i=1.
\]
Split indices into inside/outside the ball:
\[
I_{\rm in}:=\{i:\ \|y_i-x\|<r \},\qquad I_{\rm out}:=\{i:\ \|y_i-x\|\ge r \},
\]
and denote $\Lambda_{\rm out}:=\sum_{i\in I_{\rm out}}\lambda_i$.

We will show
\[
\sum_{i=1}^m\lambda_i\big(\phi(y_i)+t\,h(y_i)\big)\ \ge\ \phi(x)+t\,h(x),
\tag{*}
\]
for all $|t|$ small enough; taking infimum over such finite decompositions will give $(\phi+th)^{**}(x)\ge \phi(x)+t h(x)$, and the reverse inequality is always true, hence equality.

\begin{itemize}
    \item \textbf{Inside points ($i\in I_{\rm in}$).} From (LQ) and (T),
    \[
    \phi(y_i)+t h(y_i)
    \ \ge\ \phi(x)+t h(x) + \langle \nabla\phi(x)+t\nabla h(x),\,y_i-x\rangle
    + \frac{m-|t|M}{2}\|y_i-x\|^2.
    \]
    \item \textbf{Outside points ($i\in I_{\rm out}$).} From (BG) and (T) (which still holds for those $y_i$ with $\|y_i-x\|\le r$; if not, just use the linear bound $h(y)\ge h(x)+\langle \nabla h(x),y-x\rangle - \tfrac{M}{2}\|y-x\|^2$, which is global if you pick any $M\ge \|\nabla^2 h\|_\infty$ on $\mathbb R^d$ or simply note that $h$ is bounded outside a large ball), we have
    \[
    \phi(y_i)+t h(y_i)
    \ \ge\ \phi(x)+t h(x) + \langle \nabla\phi(x)+t\nabla h(x),\,y_i-x\rangle
    + \delta - \frac{|t|M}{2}\|y_i-x\|^2.
    \]
\end{itemize}

Sum with weights $\lambda_i$. The \textbf{barycenter constraint} $\sum \lambda_i(y_i-x)=0$ cancels all linear terms, and we get
\[
\sum_{i=1}^m\lambda_i\big(\phi(y_i)+t h(y_i)\big)
\ \ge\ \phi(x)+t h(x)
 +\frac{m-|t|M}{2}\sum_{i\in I_{\rm in}}\lambda_i\|y_i-x\|^2
 +\Lambda_{\rm out}\Big(\delta - \frac{|t|M}{2} r^2\Big).
\]
\authors{Same error as above; one cannot lower bound $-\tfrac{|t|M}{2}\,\|y_i - x\|^2 \geq -\tfrac{|t|M}{2}\,r^2$ because $y_i$ is outside the ball ($\|y_i-x\|\ge r$). Therefore, the end of the proof is wrong.}

Choose
\[
t_x\ :=\ \min\Big \{\frac{m}{M},\ \frac{2\delta}{M r^2}\Big \}>0.
\]
Then for all $|t|\le t_x$ both extra terms are $\ge 0$, hence (*) holds. Taking the infimum over all such finite decompositions yields
\[
(\phi+th)^{**}(x)\ \ge\ \phi(x)+t\,h(x).
\]
Since always $(\phi+th)^{**}\le \phi+th$, we have equality at $x$:
\[
(\phi+th)^{**}(x)=\phi(x)+t\,h(x),\qquad |t|\le t_x.
\]

\medskip
\noindent\rule{\textwidth}{0.4pt}

\subsubsection*{Step 3 — Gradient equality at $x$}

From $f^{**}\le f$ and $f^{**}(x)=f(x)$ we always have $\partial f^{**}(x)\subset \partial f(x)$. Here $f=\phi+th$ is differentiable at $x$, so $\partial f(x)= {\nabla\phi(x)+t\nabla h(x) }$. Hence $\partial(\phi+th)^{**}(x)$ is a singleton equal to that vector, i.e.
\[
\nabla(\phi+th)^{**}(x)=\nabla\phi(x)+t\,\nabla h(x).
\]

\medskip
\noindent\rule{\textwidth}{0.4pt}

\subsection*{Why this proof is “simpler”}

\begin{itemize}
    \item It uses only: a one-point quadratic lower support (from Alexandrov), a boundary gap (a compactness minimum on the sphere), the Taylor quadratic bound for $h$, and the \textbf{barycenter identity} $\sum\lambda_i(y_i-x)=0$.
    \item No epigraph geometry, no monotonicity, and no global convexity of $\phi+th$ are assumed or concluded.
    \item The smallness threshold $t_x$ is explicit:
    \[
    t_x\ \le\ \min\!\Big \{\frac{\lambda_{\min}(\nabla^2\phi(x))-\varepsilon}{\sup_{B(x,r)}\|\nabla^2 h\|},\ \frac{2\delta}{r^2\,\sup_{B(x,r)}\|\nabla^2 h\|}\Big \}.
    \]
    (It depends on $x$, which is perfectly fine for the first variation.)
\end{itemize}

If you’d like, I can translate this into a compact lemma + proof you can paste into your draft.

\medskip

\authors{[...]}

\end{ChatGPTBox} \begin{AuthorsBox}   $f$ differentiable at $x$ so its subdiff equal the gradient. This is true for convex $f$ only, no?

\medskip

\end{AuthorsBox} \begin{ChatGPTBox}       Great catch — the statement “$f$ is differentiable at $x$ so $\partial f(x)= {\nabla f(x) }$” is \textbf{only a theorem for convex $f$} (the Rockafellar–Moreau subdifferential is defined for convex functions). For a \textbf{non-convex} $f$, that convex subdifferential isn’t even defined in general.

\textbf{What we actually used does not require $f$ to be convex.} Here’s the precise (convex-analysis–free) argument we need in our setting $f=\phi+t h$ and $g=f^{**}$:

\medskip
\noindent\rule{\textwidth}{0.4pt}

\subsection*{Proposition (what we really use)}
Let $f:\mathbb R^d\to\mathbb R$ be \textbf{Fréchet differentiable at $x$}. Let $g:\mathbb R^d\to(-\infty,+\infty]$ be \textbf{proper, lsc, convex} such that
\[
g\le f\quad\text{everywhere, and}\quad g(x)=f(x).
\]
Assume $\partial g(x)\neq\varnothing$ (true here because $x$ is in the interior of $\mathrm{dom}\,g$). Then
\[
\partial g(x)= {\nabla f(x) }.
\]

\textbf{Proof.} Take any $p\in\partial g(x)$. By the convex subgradient inequality,
\[
g(z)\ \ge\ g(x)+\langle p,z-x\rangle\quad\forall z.
\]
Since $g\le f$ and $g(x)=f(x)$, we get the \textbf{global supporting inequality for $f$}:
\[
f(z)\ \ge\ f(x)+\langle p,z-x\rangle\quad\forall z.\tag{T}
\]
Fix any direction $u$ and set $z=x+\varepsilon u$, $\varepsilon>0$. Divide (T) by $\varepsilon$ and let $\varepsilon\downarrow0$. Differentiability of $f$ at $x$ gives
\[
\langle \nabla f(x),u\rangle\ \ge\ \langle p,u\rangle.
\]
Apply the same with $-u$ to get $\langle \nabla f(x),u\rangle\le \langle p,u\rangle$. Hence
\[
\langle \nabla f(x),u\rangle=\langle p,u\rangle\quad\forall u,
\]
so $p=\nabla f(x)$. Because $\partial g(x)\neq\varnothing$, we conclude
$\partial g(x)= {\nabla f(x) }$. \qed

\medskip
\noindent\rule{\textwidth}{0.4pt}

\subsection*{Why this resolves the concern}

\begin{itemize}
    \item We \textbf{never} used “$\partial f(x)$” for non-convex $f$.  
    We used the fact that any $p\in\partial g(x)$ (with $g$ convex, $g\le f$, equality at $x$) is a \textbf{global supporting slope} for $f$ at $x$, and then the differentiability of $f$ at $x$ forces $p=\nabla f(x)$. This argument does \textbf{not} require $f$ to be convex.
    \item In our application, $f=\phi+th$ is Fréchet differentiable at $x$ (since both $\phi$ and $h$ are), $g=f^{**}$ is convex with $g\le f$, and we proved \textbf{pointwise equality} $g(x)=f(x)$ for $|t|$ small (the local convex-envelope lemma). Also, because $f$ is finite near $x$, $x\in\mathrm{int}(\mathrm{dom}\,g)$, so $\partial g(x)\neq\varnothing$. Thus the proposition gives
    \[
    \nabla(\phi+th)^{**}(x)=\nabla(\phi+th)(x)=\nabla\phi(x)+t\,\nabla h(x),
    \]
    exactly what we needed.
\end{itemize}

\medskip
\noindent\rule{\textwidth}{0.4pt}

\subsection*{Optional note (generalized subdifferentials)}

If you \emph{do} want a notion of subdifferential for non-convex $f$, you can use the \textbf{Fréchet} or \textbf{Clarke} subdifferentials. At points where $f$ is differentiable, \textbf{all} standard generalized subdifferentials collapse to $ {\nabla f(x) }$. We didn’t need this machinery here, because the simple argument above suffices.

\medskip

So: your instinct was right about the convex subdifferential, but the step in our proof is valid because we avoid “$\partial f$” entirely and pin down $\partial g(x)$ via a global supporting inequality plus differentiability of $f$ at $x$.

\medskip

\authors{[...]}

\end{ChatGPTBox} \begin{AuthorsBox}   Ok. Now let's go back to the local equality between the function and its biconjugate.
 
Can we use that the biconjugate is the affine envelope? The only thing to show is that for $t$ small enough there exists an affine minorant that is equal to $f$ at $x$.

\authors{Here, we suggested a proof technique to the model.}

We know that is the case when $t$ is zero, it should still be true when $t$ is small (depending on $x$).

\medskip

\end{AuthorsBox} \begin{ChatGPTBox}       Yes—that “affine–envelope” viewpoint is exactly right and gives a short proof.

Recall the characterization (Fenchel–Moreau, e.g.\ Rockafellar \textit{Convex Analysis} Thm.~12.2): for any extended real function $f$,
\[
f^{**}(x)=\sup\{\,a(x)\ :\ a\text{ affine and }a\le f\text{ on }\mathbb{R}^d\,\}.
\]
So to get $f^{**}(x)=f(x)$ it suffices to \textbf{exhibit one affine minorant $a_t$ of $f$ that touches at $x$}.

We take $f=\phi+t\,h$ with $h\in C^2$ and (in our setup) compactly supported so $\|\nabla^2 h\|_\infty<\infty$. Fix a point $x$ where $\phi$ is twice Fréchet differentiable and $H:=\nabla^2\phi(x)\succ0$. Write
\[
p_0:=\nabla\phi(x),\qquad q:=\nabla h(x),\qquad M:=\|\nabla^2 h\|_\infty.
\]
Define the affine function
\[
a_t(y)\ :=\ f(x)+\langle p_t,\,y-x\rangle,\qquad p_t:=p_0+t\,q.
\]
We will show that for $|t|$ small enough (depending on $x$) we have $a_t(y)\le f(y)$ \textbf{for all $y$}. Since $a_t(x)=f(x)$, this gives $f^{**}(x)\ge a_t(x)=f(x)$, hence $f^{**}(x)=f(x)$.

\medskip
\noindent\rule{\textwidth}{0.4pt}

\subsubsection*{Why $a_t\le f$ for $|t|$ small}

There are two regions to check.

\textbf{1) Near $x$:} (use a local quadratic lower support for $\phi$).  
By Alexandrov at $x$ and $H\succ0$, there exist $r>0$ and $m>0$ such that
\[
\phi(y)\ \ge\ \phi(x)+\langle p_0,\,y-x\rangle+\frac{m}{2}\,\|y-x\|^2
\qquad(\|y-x\|\le r).
\]
Also, by Taylor’s inequality and the global bound $M$,
\[
h(y)\ \ge\ h(x)+\langle q,\,y-x\rangle-\frac{M}{2}\,\|y-x\|^2\qquad(\forall y).
\]
Therefore, for $\|y-x\|\le r$,
\[
\begin{aligned}
f(y)&=\phi(y)+t\,h(y)\\
&\ge f(x)+\langle p_0+t q,\,y-x\rangle+\frac{m-|t|M}{2}\,\|y-x\|^2\\
&\ge a_t(y)\qquad\text{whenever }|t|\le \frac{m}{M}.
\end{aligned}
\]

\textbf{2) Away from $x$:} (use a fixed “boundary gap” of $\phi$).  
By continuity of $u\mapsto \phi(x+u)-\phi(x)-\langle p_0,u\rangle$ and the strict positivity above, there is $\delta>0$ such that
\[
\phi(y)\ \ge\ \phi(x)+\langle p_0,\,y-x\rangle+\delta\qquad(\|y-x\|=r).
\]
Convexity along rays then propagates this outside the ball $B(x,r)$:
\[
\phi(y)\ \ge\ \phi(x)+\langle p_0,\,y-x\rangle+\delta\qquad(\|y-x\|\ge r).
\]
Using again the global Taylor lower bound for $h$,
\[
\begin{aligned}
f(y)&=\phi(y)+t\,h(y)\\
&\ge f(x)+\langle p_0+t q,\,y-x\rangle\;+\;\Big(\delta-\frac{|t|M}{2}\,r^2\Big)
\qquad(\|y-x\|\ge r).
\end{aligned}
\]
\authors{Same error as above; $\|y-x\|\ge r \ge 0$ does not imply $-\|y-x\|^2\ge -r^2$.}

Thus $f(y)\ge a_t(y)$ on $\|y-x\|\ge r$ provided $|t|\le \dfrac{2\delta}{M r^2}$.

Putting 1) and 2) together, for
\[
|t|\ \le\ t_x\ :=\ \min\!\left\{\frac{m}{M},\ \frac{2\delta}{M r^2}\right\}
\]
we have $a_t(y)\le f(y)$ for \textbf{all} $y$, with equality at $x$. Hence $a_t$ is a global affine minorant touching $f$ at $x$, and by the affine-envelope characterization
\[
(\phi+t h)^{**}(x)\ =\ f^{**}(x)\ \ge\ a_t(x)\ =\ f(x).
\]
Since $f^{**}\le f$ always, we conclude $f^{**}(x)=f(x)$.

\medskip
\noindent\rule{\textwidth}{0.4pt}

\subsubsection*{Two quick corollaries (that you also want)}

\begin{enumerate}
    \item \textbf{Gradient equality at $x$.}  
    Let $g:=f^{**}$. From $g\le f$ and $g(x)=f(x)$, any $p\in\partial g(x)$ is a global supporting slope for $f$ at $x$, i.e. $f(z)\ge f(x)+\langle p,z-x\rangle$ for all $z$. Since $f$ is differentiable at $x$, this forces $p=\nabla f(x)$ (take directional difference quotients in $\pm u$). As $\partial g(x)\neq\varnothing$ (interior point), we get
    \[
    \nabla(\phi+t h)^{**}(x)=\nabla(\phi+t h)(x)=\nabla\phi(x)+t\,\nabla h(x).
    \]
    \item \textbf{Only a pointwise $t_x$ is needed.}  
    The bound $t_x$ above depends on $x$ (via $m,\delta,r$), but for our application we only need the \textbf{pointwise} derivative in $t$ at each $x$ (to identify the Lagrangian velocity), not a uniform $t$-window. All subsequent differentiation is done after OU smoothing, where dominated convergence applies.
\end{enumerate}

\medskip
\noindent\rule{\textwidth}{0.4pt}

\subsubsection*{Takeaway}

Yes—you can (and we just did) prove the local equality $f^{**}(x)=f(x)$ for small $|t|$ by using the “biconjugate = affine envelope” characterization. The affine minorant is explicit:
\[
a_t(y)=f(x)+\langle \nabla\phi(x)+t\nabla h(x),\,y-x\rangle,
\]
and the two simple inequalities above show $a_t\le f$ globally when $|t|$ is small (depending on $x$). This gives the desired contact at $x$ and, by the sandwich argument, equality of gradients there as well.

\medskip

\end{ChatGPTBox} \begin{AuthorsBox}   

I don't understand the away from $x$ case. How do you bound by $r$?
\medskip

   \end{AuthorsBox} \begin{ChatGPTBox} 

Good — let’s slow down and focus just on the \textbf{“away from $x$” part} of the local affine–envelope proof.  
The goal there was: for points $y$ not too close to $x$, show that $\phi(y)$ lies strictly above the affine plane
\[
a_0(y) = \phi(x)+\langle \nabla \phi(x), y-x\rangle
\]
by some positive margin. Then, when you perturb by $t h(y)$, you want to check that margin is not eaten up by the small $t$-perturbation.

\medskip
\noindent\rule{\textwidth}{0.4pt}

\subsection*{Step 1. Why is there a positive margin $\delta>0$ on a sphere?}

At $x$ we know Alexandrov’s second-order expansion with Hessian $H\succ 0$:
\[
\phi(x+u) = \phi(x)+\langle \nabla \phi(x), u\rangle + \tfrac12 \langle H u,u\rangle + o(\|u\|^2).
\]

\begin{itemize}
    \item For very small $u\ne 0$, the difference
    \[
    \phi(x+u)-\phi(x)-\langle \nabla \phi(x),u\rangle
    \]
    is approximately $\tfrac12\langle H u,u\rangle$.
    \item Because $H\succ0$, this quadratic form is $\ge \tfrac{\lambda_{\min}(H)}{2}\|u\|^2$. In particular, the difference is \textbf{strictly positive} for all small nonzero $u$.
    \item The map $u\mapsto \phi(x+u)-\phi(x)-\langle \nabla \phi(x),u\rangle$ is continuous (difference of continuous functions). On the compact sphere $\{u:\|u\|=r\}$, a continuous positive function attains a \textbf{positive minimum}. Call it $\delta$.
\end{itemize}

So
\[
\delta := \min_{\|u\|=r} \big[\phi(x+u)-\phi(x)-\langle \nabla\phi(x),u\rangle \big] >0.
\]
That’s the justification for $\delta>0$.

\medskip
\noindent\rule{\textwidth}{0.4pt}

\subsection*{Step 2. Why does the same bound extend “beyond radius $r$”?}

Convexity: for any $y$ with $\|y-x\|>r$, write it as
\[
y = (1-\alpha)x + \alpha(x+u),\qquad \alpha>1,\ \|u\|=r.
\]
(Just take the radial ray through $x$; any point beyond the sphere lies at a convex combination with $\alpha>1$.)

By convexity of $\phi$,
\[
\phi(y) \ge (1-\alpha)\phi(x) + \alpha \phi(x+u).
\]
\authors{Convexity inequality is used in the wrong direction}

Subtract the affine plane at $x$:
\[
\phi(y) - \phi(x) - \langle \nabla\phi(x), y-x\rangle
\ge \alpha\big[ \phi(x+u)-\phi(x)-\langle \nabla\phi(x),u\rangle \big].
\]
But by definition of $\delta$, the bracket is $\ge \delta$. Hence the whole difference is $\ge \alpha \delta$. In particular it is at least $\delta$. So
\[
\phi(y)\ \ge\ \phi(x)+\langle \nabla\phi(x), y-x\rangle + \delta\qquad(\|y-x\|\ge r).
\]
That’s exactly the inequality denoted (BG).

\medskip
\noindent\rule{\textwidth}{0.4pt}

\subsection*{Step 3. Why is this useful?}

\begin{itemize}
    \item For $y$ away from $x$, $\phi(y)$ lies \textbf{strictly above} the supporting affine plane at $x$ by at least $\delta$.
    \item When you add the perturbation $t h(y)$, the worst that can happen is you lose $\tfrac{|t|M}{2}\|y-x\|^2$ (from the quadratic Taylor bound on $h$). But because $\|y-x\|\ge r$, this loss is at most $\tfrac{|t|M}{2}r^2$.

    \authors{The justification above is wrong because $y$ is outside the ball.}
    
    \item So as long as $|t|\le \tfrac{2\delta}{M r^2}$, the positive margin $\delta$ dominates the possible negative perturbation from $t h$. Then $\phi(y)+t h(y)$ still stays above the affine plane through $(x,f(x))$ with slope $\nabla\phi(x)+t\nabla h(x)$. That’s what ensures $a_t(y)\le f(y)$ for all $y$ outside $B(x,r)$.
\end{itemize}

\medskip
\noindent\rule{\textwidth}{0.4pt}

\textbf{Summary.}
\begin{itemize}
    \item $\delta>0$ comes from the fact that the quadratic correction in Alexandrov’s expansion is strictly positive definite, and hence the difference with the tangent plane is continuous and strictly positive on the sphere.
    \item The extension beyond radius $r$ is just convexity along rays, which scales up that positive difference.
\end{itemize}

Together this ensures that the affine minorant with slope $\nabla\phi(x)+t\nabla h(x)$ is globally below $\phi+t h$ when $|t|$ is small (depending on $x$), giving the local equality between $f$ and $f^{**}$ at $x$.

\medskip

\end{ChatGPTBox} 

\authors{Somehow we received a second answer to our last prompt.}

\begin{ChatGPTBox}       

Good — let me slow down and make the “away from $x$” part completely transparent.  

We are trying to construct, for $f=\phi+t h$, an \textbf{affine minorant}
\[
a_t(y) = f(x)+\langle p_t,\,y-x\rangle, \qquad p_t=\nabla\phi(x)+t\nabla h(x),
\]
valid for all $y$, that touches $f$ at $x$.

\medskip
\noindent\rule{\textwidth}{0.4pt}

\subsection*{Step 1. Why there is a positive “gap” at distance $r$}

At $t=0$, by Alexandrov differentiability with Hessian $H=\nabla^2\phi(x)\succ 0$, we have a \textbf{local quadratic support}:
\[
\phi(x+u)\ge \phi(x)+\langle\nabla\phi(x),u\rangle + \tfrac{m}{2}\|u\|^2,\quad \|u\|\le r
\]
for some $m>0,r>0$. In particular, if you restrict to the \emph{sphere} $\|u\|=r$, this lower bound says
\[
\phi(x+u)-\phi(x)-\langle\nabla\phi(x),u\rangle \ge \tfrac{m}{2}\,r^2.
\]

So if you define
\[
\delta := \min_{\|u\|=r}\big(\phi(x+u)-\phi(x)-\langle\nabla\phi(x),u\rangle\big),
\]
then by the inequality above $\delta \ge \tfrac{m}{2}r^2 > 0$.  
That is where the positive number $\delta$ comes from.

\medskip
\noindent\rule{\textwidth}{0.4pt}

\subsection*{Step 2. Extend the gap outside the ball}

Now why does this inequality propagate to all $\|y-x\|\ge r$? This is simply convexity of $\phi$.

Fix any direction $u\in \mathbb S^{d-1}$. Consider the function along the ray
\[
g(s) := \phi(x+s u) - \phi(x) - \langle \nabla\phi(x), s u\rangle,\quad s\ge 0.
\]

\begin{itemize}
    \item From the quadratic support near 0 we know $g(s)\ge \tfrac{m}{2}s^2$ for small $s$.
    \item Convexity of $\phi$ implies convexity of $g(s)$ as a function of $s$.
    \item In particular, $g(s)/s$ is nondecreasing in $s>0$.
\end{itemize}

So if at radius $r$ we have $g(r)\ge \delta$, then for every $s\ge r$,
\[
\frac{g(s)}{s}\ge\frac{g(r)}{r}\ge\frac{\delta}{r}.
\]

Thus for every $y=x+s u$ with $\|y-x\|=s\ge r$,
\[
\phi(y)\ge \phi(x)+\langle \nabla\phi(x),y-x\rangle + \tfrac{\delta}{r}\,\|y-x\|.
\]

This is the \textbf{“boundary gap” inequality}:

\authors{The proof of the boundary gap is now correct.}

Convexity ensures that once there is a gap $\delta$ on the sphere of radius $r$, that gap (rescaled linearly with distance) persists for all larger radii.

\medskip
\noindent\rule{\textwidth}{0.4pt}

\subsection*{Step 3. Why this helps with the affine minorant}

For $y$ with $\|y-x\|\ge r$, we thus have
\[
\phi(y) \ge \phi(x)+\langle\nabla\phi(x),y-x\rangle + \tfrac{\delta}{r}\,\|y-x\|.
\]

Adding the Taylor lower bound for $h$,
\[
h(y)\ge h(x)+\langle\nabla h(x),y-x\rangle - \tfrac{M}{2}\|y-x\|^2,
\]
we get for $f(y)=\phi(y)+t h(y)$:
\[
f(y)\ge f(x)+\langle\nabla\phi(x)+t\nabla h(x),y-x\rangle
+\frac{\delta}{r}\|y-x\| - \tfrac{|t|M}{2}\|y-x\|^2.
\]

So indeed,
\[
f(y)\ge a_t(y) + \left(\frac{\delta}{r}\|y-x\| - \tfrac{|t|M}{2}\|y-x\|^2\right).
\]

The term in parentheses is nonnegative whenever
\[
|t|\le \frac{2\delta}{M r^2}.
\]
\authors{The model is again bounding $- \tfrac{|t|M}{2}\|y-x\|^2 \ge - \tfrac{|t|M r}{2}\|y-x\|$ which is true when $\|y-x\|\le r$, whereas here $y$ is outside the ball: $\|y-x\|\ge r$.}

That gives the “away from $x$” condition.

\medskip
\noindent\rule{\textwidth}{0.4pt}

\textbf{Summary.}
\begin{itemize}
    \item The constant $\delta>0$ is guaranteed because the local quadratic lower bound forces a strict gap on the sphere $\|y-x\|=r$.
    \item Convexity propagates this gap outwards (linearly in $\|y-x\|$), so you can control all $y$ outside the ball.
    \item That’s how you obtain a \textbf{global affine minorant} touching $f$ at $x$ whenever $|t|$ is small enough.
\end{itemize}

So yes: you can phrase the whole proof in the “affine envelope” language: for each $x$ there is an affine function $a_t$ with slope $p_t=\nabla\phi(x)+t\nabla h(x)$ that underestimates $f=\phi+t h$ globally and touches it at $x$. That is exactly what shows $f^{**}(x)=f(x)$ and then $\nabla f^{**}(x)=\nabla f(x)$ at $x$.

\medskip

\medskip
\authors{[...]}

\end{ChatGPTBox} \begin{AuthorsBox}   

Outside the ball the quadratic penalty coming from $h$ becomes larger. Not $r^2$.

\medskip

\end{AuthorsBox} \begin{ChatGPTBox}       
\medskip

You’re right to be wary of the “$r^2$” that showed up before. There is a simpler, cleaner way to do the “away from $x$” part that doesn’t rely on a crude quadratic bound at large radii, and it makes explicit why the choice of $t$ can (and in general will) depend on $x$.

I’ll give a short proof that produces an affine minorant $a_t$ touching $f=\phi+t h$ at $x$ and staying below $f$ everywhere when $|t|$ is small (depending on $x$). The only inputs are:

\begin{itemize}
    \item Alexandrov at $x$: $\phi$ has a positive-definite Hessian $H=\nabla^2\phi(x)\succ0$.
    \item $h\in C^2$ (no global convexity of $h$ needed; compact support makes life even easier).
\end{itemize}

Throughout, write
\[
p_0:=\nabla\phi(x),\qquad q:=\nabla h(x).
\]

We will use the \textbf{affine-envelope characterization} of the biconjugate:
\[
f^{**}(x)=\sup \{a(x):a\text{ affine},\ a\le f\text{ on }\mathbb R^d\}.
\]
So it suffices to build \textbf{one affine minorant} $a_t$ with $a_t\le f$ everywhere and $a_t(x)=f(x)$. We’ll choose
\[
a_t(y):=f(x)+\langle p_0,\,y-x\rangle=\phi(x)+t\,h(x)+\langle \nabla\phi(x),y-x\rangle.
\]
Note: I am \textbf{not} including the $t\,q$ slope; using $p_0$ (the slope of $\phi$ at $x$) is the safe choice globally.

The task is to show:
\[
\phi(y)+t\,h(y)\ge \phi(x)+t\,h(x)+\langle p_0,\,y-x\rangle\qquad\forall y\in\mathbb R^d,
\]
for $|t|$ small (depending on $x$). We split the space into two regions.

\medskip
\noindent\rule{\textwidth}{0.4pt}

\subsection*{1) Inside a small ball $B(x,r)$}

From Alexandrov and $H\succ0$, for any small $\varepsilon\in(0,\lambda_{\min}(H)/2)$ we have (for some $r>0$)
\[
\phi(x+u)\ge \phi(x)+\langle p_0,u\rangle+\frac{m}{2}\|u\|^2,\qquad m:=\lambda_{\min}(H)-\varepsilon>0,\ \ \|u\|\le r.
\]
Also, by a Taylor bound for $h$ on $B(x,r)$,
\[
h(x+u)\ge h(x)+\langle q,u\rangle-\frac{M}{2}\|u\|^2,\qquad M:=\sup_{B(x,r)}\|\nabla^2 h\|.
\]
Therefore, for $\|u\|\le r$,
\[
\begin{aligned}
f(x+u)-a_t(x+u)
&=(\phi(x+u)-\phi(x)-\langle p_0,u\rangle) + t(h(x+u)-h(x))\\
&\ge \frac{m}{2}\|u\|^2 + t\langle q,u\rangle - \frac{|t|M}{2}\|u\|^2.
\end{aligned}
\]
To handle the linear term $t\langle q,u\rangle$, complete the square (or use Young’s inequality):
\[
t\langle q,u\rangle \ge -\,\frac{m}{4}\|u\|^2 - \frac{t^2\|q\|^2}{m}.
\]
Thus, for $\|u\|\le r$,
\[
f(x+u)-a_t(x+u)
\ge \Big(\frac{m}{2}-\frac{|t|M}{2}-\frac{m}{4}\Big)\|u\|^2 - \frac{t^2\|q\|^2}{m}.
\]
If you choose $|t|\le m/M$, the coefficient in front of $\|u\|^2$ is $\ge m/4>0$, hence the RHS is $\ge -\frac{t^2\|q\|^2}{m}$. In particular,
\[
\inf_{\|u\|\le r}\big(f(x+u)-a_t(x+u)\big)\ge -\frac{t^2\|q\|^2}{m}.
\]
Since $t\to0$, this term is $O(t^2)$. If you want \textbf{nonnegativity} (not just a tiny negative), simply shrink $r$ a bit \textbf{and} shrink $|t|$ further so that the positive quadratic part dominates the linear perturbation everywhere on $B(x,r)$; equivalently, fix $|t|\le \min \{m/M,\,{m r}/(2\|q\|) \}$. Then
\[
f(x+u)\ge a_t(x+u)\qquad(\|u\|\le r).
\]
This is the “inside the ball” part.

\emph{(If you prefer to be even more explicit: picking $|t|\le m/M$ ensures the quadratic coefficient is $\ge m/4$. Then pick $|t|\le \frac{m r}{4\|q\|}$ so that $\frac{m}{4}\|u\|^2 \ge |t|\|q\|\,\|u\|$ for all $\|u\|\le r$.)}

\authors{The condition $|t|\le \frac{m r}{4\|q\|}$ does not imply $\frac{m}{4}\|u\|^2 \ge |t|\|q\|\,\|u\|$. Same type of mistake as above.}

\medskip
\noindent\rule{\textwidth}{0.4pt}

\subsection*{2) Outside the ball $B(x,r)$}

Here we \textbf{don’t} use a quadratic bound. We use a \textbf{fixed gap} for $\phi$, and the \textbf{compactness of the support of $h$} (or, more generally, that $h$ is bounded on the annulus we consider).

First, because $\phi$ sits strictly above its tangent plane on the sphere $\partial B(x,r)$, continuity yields
\[
\delta := \min_{\|u\|=r}\big(\phi(x+u)-\phi(x)-\langle p_0,u\rangle\big)>0.
\]
Convexity along rays implies this same gap persists for any point farther out:
\[
\phi(y)\ge \phi(x)+\langle p_0, y-x\rangle + \delta,\qquad \|y-x\|\ge r.
\]

Now split the outside region into two parts:

\begin{itemize}
    \item \textbf{Far outside, where $h(y)=0$} (this is where compact support helps):  
    Choose $R_x>r$ so that $\mathrm{supp}(h)\subset B(x,R_x)$ (this is always possible for a fixed $x$, since $\mathrm{supp}(h)$ is compact).  
    For $\|y-x\|\ge R_x$, $h(y)=0$, and hence
    \[
    f(y)=\phi(y)
    \ge \phi(x)+\langle p_0, y-x\rangle
    = a_t(y) - t\,h(x),
    \]
    which is \textbf{even stronger} than needed; in fact $f(y)\ge a_t(y)$ because the extra $t\,h(x)$ appears on the other side of $a_t$’s definition (we included $t h(x)$ in $a_t$ already).
    
    \authors{The displayed inequality is not necessarily stronger.}
    \item \textbf{The annulus $r\le \|y-x\|\le R_x$}:  
    On this compact set, $h$ is bounded, so
    \[
    c_x := \sup_{r\le \|y-x\|\le R_x}\ |h(y)-h(x)|<\infty.
    \]
    Using the gap for $\phi$,
    \[
    f(y)-a_t(y)
    =\underbrace{\phi(y)-\phi(x)-\langle p_0,y-x\rangle}_{\ge \delta}
    + t\big(h(y)-h(x)\big)
    \ge \delta - |t|\,c_x.
    \]
    
    \authors{But, $x$ is not on the annulus to bound $t\big(h(y)-h(x)\big) \ge -|t|c_x$.}
    
    Hence on the annulus we ensure $f(y)\ge a_t(y)$ by choosing
    \[
    |t|\le \frac{\delta}{c_x}.
    \]
\end{itemize}

Putting the two parts together, for
\[
|t| \le t_x := \min\left( \frac{m}{M}, \frac{m r}{2\|q\|}, \frac{\delta}{c_x} \right)
\]
we have $f(y)\ge a_t(y)$ for \textbf{every} $y\in\mathbb R^d$, with equality at $x$. Thus $a_t$ is a \textbf{global affine minorant touching at $x$}, and by the affine-envelope characterization,
\[
(\phi+th)^{**}(x)\ge a_t(x)=f(x).
\]
Since $(\phi+th)^{**}\le f$ always, we conclude
\[
(\phi+th)^{**}(x)=f(x)=\phi(x)+t\,h(x).
\]

Finally, the \textbf{gradient equality at $x$} follows from the “sandwich” subgradient argument that does \emph{not} use convexity of $f$: if $g=f^{**}\le f$ and $g(x)=f(x)$, then any $p\in\partial g(x)$ satisfies the global support inequality $f(z)\ge f(x)+\langle p,z-x\rangle$ for all $z$; differentiability of $f$ at $x$ forces $p=\nabla f(x)$. Hence
\[
\nabla(\phi+th)^{**}(x)=\nabla(\phi+th)(x)=\nabla\phi(x)+t\,\nabla h(x).
\]

\medskip
\noindent\rule{\textwidth}{0.4pt}

\subsection*{Why this avoids your concern}

\begin{itemize}
    \item We \textbf{do not} try to control a negative quadratic outside the ball.  
    Instead, we use a \textbf{fixed linear gap $\delta$} for $\phi$ outside $B(x,r)$, and the fact that on the \textbf{bounded annulus} where $h$ can be nonzero we have a \textbf{uniform bound} $|h(y)-h(x)|\le c_x$. So we require $|t|\le \delta/c_x$ there.  
    For the region \textbf{beyond the annulus}, $h(y)=0$ and convexity of $\phi$ alone gives $f(y)\ge a_t(y)$.
    \item The constants $(r,m,\delta,R_x,c_x)$ depend on $x$, so $t_x$ does too. That’s expected and harmless: we only need the \textbf{pointwise} first-order information at each $x$ (to identify the Lagrangian velocity); no uniform $t$-window is required for the rest of the proof, which is carried out after OU smoothing.
\end{itemize}

If you’d like, I can compact this into a lemma you can paste—“Affine minorant touching at $x$ for $\phi+th$”—with the constants spelled out exactly as above.

\medskip

\end{ChatGPTBox} \begin{AuthorsBox}   Inside the ball, you have to take $t$ depending on $u$. Not allowed.

\medskip

\end{AuthorsBox} \begin{ChatGPTBox}       

You’re right — the way I wrote the “inside the ball” part looked like the choice of $t$ might depend on $\|u\|$. Here’s the clean fix with a \textbf{single} $t$ (depending on $x$, but \textbf{not} on $u$) that works \textbf{uniformly for all $\|u\|\le r$}.

\medskip
\noindent\rule{\textwidth}{0.4pt}

\subsection*{Goal (inside $B(x,r)$)}

We want
\[
f(x+u)-a_t(x+u)\ge 0 \qquad\text{for all }\|u\|\le r,
\]
with
\[
f=\phi+t\,h,\quad a_t(y)=f(x)+\langle p_0,\,y-x\rangle,\quad p_0:=\nabla\phi(x),\quad q:=\nabla h(x).
\]

From the one-point quadratic lower support for $\phi$ and the quadratic Taylor bound for $h$, we have (for some $m>0$, all $\|u\|\le r$)
\[
\begin{aligned}
f(x+u)-a_t(x+u)
&=(\phi(x+u)-\phi(x)-\langle p_0,u\rangle)+t(h(x+u)-h(x))\\
&\ge \frac{m}{2}\|u\|^2 + t\langle q,u\rangle - \frac{|t|M}{2}\,\|u\|^2\\
&=: \alpha\,\|u\|^2 - \beta\,\|u\|,
\end{aligned}
\]
where
\[
\alpha=\frac{m-|t|M}{2},\qquad \beta=|t|\,\|q\|.
\]
So the problem reduces to ensuring
\[
\alpha s^2-\beta s\ge 0\quad\text{for all } s\in[0,r].
\tag{Q}
\]

\authors{This approach does not sound right from the beginning, because the slope of $a_t$ is $p_0$ and not $p_0 + tq$. We see in the next section that the model is making a number of mistakes trying to justify its approach based on (Q).}

\medskip
\noindent\rule{\textwidth}{0.4pt}

\subsection*{A uniform choice of $t$ that guarantees (Q)}

\begin{itemize}
    \item First, we need $\alpha>0$, i.e.
    \[
    |t|<\frac{m}{M}.
    \tag{C1}
    \]
    \item The convex quadratic $s\mapsto \alpha s^2-\beta s$ attains its minimum on $[0,r]$ either at the endpoint $s=r$ or at the (unconstrained) minimizer $s^\star=\beta/(2\alpha)$ if $s^\star\le r$. 
    To avoid the interior minimum (which would give a negative value unless $\beta=0$), we \textbf{force} the minimizer to lie \textbf{beyond $r$}:
    \[
    \frac{\beta}{2\alpha}\ge r
    \ \Longleftrightarrow\
    \alpha r \ge \beta.
    \tag{C2}
    \]
    Under (C2), the minimum on $[0,r]$ occurs at $s=r$, and then
    \[
    \min_{s\in[0,r]}(\alpha s^2-\beta s)=\alpha r^2-\beta r\ge 0
    \quad\text{iff }\ \alpha r\ge \beta,
    \]
    i.e. the same condition.
\end{itemize}

Plugging $\alpha,\beta$ in (C2) gives the \textbf{uniform} bound
\[
\frac{m-|t|M}{2}\,r \ge |t|\,\|q\|
\ \Longleftrightarrow\
|t|\le \frac{m r}{M r+2\|q\|}.
\tag{C2'}
\]

Thus, a single choice
\[
\boxed{|t|\le t_x^{\rm in}:=\min\!\left\{\frac{m}{M},\ \frac{m r}{M r+2\|q\|}\right\}}
\]
ensures $f(x+u)-a_t(x+u)\ge 0$ \textbf{for every} $\|u\|\le r$.

\emph{(If $q=\nabla h(x)=0$, condition (C2') is trivial; you only need $|t|<m/M$ to keep $\alpha>0$.)}

\medskip
\noindent\rule{\textwidth}{0.4pt}

\subsection*{Outside the ball (quick recap without any $r^2$ issue)}

\begin{itemize}
    \item On $\partial B(x,r)$, by continuity and the strict positivity coming from the Alexandrov bound, the \textbf{gap}
    \[
    \delta:=\min_{\|u\|=r}\,\big(\phi(x+u)-\phi(x)-\langle p_0,u\rangle\big)\ >\ 0.
    \]
    Convexity along rays propagates this: for all $\|y-x\|\ge r$, $\phi(y)\ge \phi(x)+\langle p_0,y-x\rangle+\delta$.

    \item Since $h$ is continuous and the annulus $r\le\|y-x\|\le R_x$ is compact (choose $R_x$ so $\mathrm{supp}(h)\subset B(x,R_x)$), set
    \[
    c_x:=\sup_{r\le\|y-x\|\le R_x}\,|h(y)-h(x)|<\infty.
    \]
    Then
    \[
    f(y)-a_t(y)=\underbrace{\phi(y)-\phi(x)-\langle p_0,y-x\rangle}_{\ge \delta} + t\,(h(y)-h(x))
    \ge \delta - |t|\,c_x.
    \]
    
    \authors{The model is again confused on who is $x$. It is not on the annulus. However, our final proof uses a similar bound to control $h(y)-h(x)$.}
    
    So for $|t|\le \delta/c_x$, we have $f\ge a_t$ on the annulus; and beyond $R_x$ we have $h=0$, hence $f=\phi\ge a_t$ by convexity of $\phi$.
\end{itemize}

\medskip
\noindent\rule{\textwidth}{0.4pt}

\subsection*{Final uniform $t$ (depending on $x$, not on $u$)}

Take
\[
\boxed{t_x:=\min\!\left\{\,t_x^{\rm in},\ \frac{\delta}{c_x}\,\right\}=
\min\!\left\{\frac{m}{M},\ \frac{m r}{M r+2\|q\|},\ \frac{\delta}{c_x}\right\}.}
\]
Then for \textbf{every} $y\in\mathbb R^d$, $f(y)\ge a_t(y)$ when $|t|\le t_x$, with equality at $x$. Hence $a_t$ is a \textbf{global affine minorant touching at $x$}, so by the affine-envelope characterization
\[
(\phi+t h)^{**}(x)=f^{**}(x)=f(x)=\phi(x)+t\,h(x).
\]
Finally, the “sandwich” argument with differentiable $f$ at $x$ gives
\[
\nabla(\phi+th)^{**}(x)=\nabla(\phi+th)(x)=\nabla\phi(x)+t\nabla h(x).
\]

\medskip
\noindent\rule{\textwidth}{0.4pt}

\textbf{Takeaway.} The inside-ball nonnegativity can be enforced with a \textbf{single, uniform} inequality (C2'), so $t$ does \textbf{not} depend on $\|u\|$. The outside-ball part uses a fixed linear gap for $\phi$ and boundedness of $h$ on a compact annulus, giving another uniform bound $|t|\le \delta/c_x$. Combining the two yields a \textbf{pointwise} $t_x>0$ (depending on $x$, as expected) which makes the affine minorant work globally.

\medskip

\end{ChatGPTBox}

\authors{At this stage of the chat, we had enough ingredients to write a full and correct proof, see Section~\ref{sec:math}.}

\end{document}